\newtheorem{theorem}{Theorem}[section]
\newtheorem{corollary}[theorem]{Corollary}
\newtheorem{lemma}[theorem]{Lemma}
\newtheorem{proposition}[theorem]{Proposition}
\theoremstyle{definition}
\newtheorem{definition}[theorem]{Definition}
\theoremstyle{remark}
\numberwithin{equation}{section}
\newcommand{\Gf}{der_{\theta, b}(G, f)}
\begin{document}
\title{Simple polyadic groups}
\author{H. Khodabandeh}
\address{H. Khodabandeh\\
Department of Pure Mathematics,  Faculty of Mathematical
Sciences, University of Tabriz, Tabriz, Iran }
\author{\sc M. Shahryari}
\thanks{{\scriptsize
\hskip -0.4 true cm MSC(2010): 20N15
\newline Keywords: Polyadic groups, $n$-ary groups, Retract of $n$-ary groups, Simple polyadic groups, Congrueces}}

\address{M. Shahryari\\
 Department of Pure Mathematics,  Faculty of Mathematical
Sciences, University of Tabriz, Tabriz, Iran }

\email{mshahryari@tabrizu.ac.ir}
\date{\today}

\begin{abstract}
The main aim of this article is to establish a classification of
simple polyadic groups in terms of ordinary groups and their
automorphisms. We give two different definitions of simpleness for
polyadic groups, from the point of views of universal algebra, UAS
(universal algebraically simpleness), and group theory, GTS (group
theoretically simpleness). We obtain the necessary and sufficient
conditions for a polyadic group to be UAS or GTS.

\end{abstract}

\maketitle


\section{Introduction}

Let $G$ be a non-empty set and $n$ be a positive integer. If
$f:G^n\to G$ is an $n$-ary operation, then we use the compact
notation $f(x_1^n)$ for the elements $f(x_1, \ldots, x_n)$. In
general, if $x_i, x_{i+1}, \ldots, x_j$ is an arbitrary sequence of
elements in $G$, then we denote it as $x_i^j$. In the special case,
when all terms of this sequence are equal to a constant $x$, we show
it by $\stackrel{(t)}{x}$, where $t$ is the number of terms. During
this article, we assume that $n\geq 2$. We say that an $n$-ary
operation is {\em associative}, if for any $1\leq i<j\leq n$, the
equality
$$
f(x_1^{i-1},f(x_i^{n+i-1}),x_{n+i}^{2n-1})=
f(x_1^{j-1},f(x_j^{n+j-1}),x_{n+j}^{2n-1})
$$
holds for all $x_1,\ldots,x_{2n-1}\in G$. An $n$-ary system $(G,f)$
is called an {\em $n$-ary group} or a {\em polyadic group}, if $f$
is associative and for all $a_1,\ldots,a_n, b\in G$ and $1\leq i\leq
n$, there exists a unique element $x\in G$ such that
$$
f(a_1^{i-1},x,a_{i+1}^n)=b.
$$
It is proved that the uniqueness assumption on the solution $x$ can
be dropped, see \cite{Dud2} for details. Clearly, the case $n=2$ is
just the definition of ordinary groups.

For a review of basic notions, we introduce some materials. First of
all, there is the classical paper of E. Post \cite{Post}, which is
one of the first articles published on the subject. In this paper,
Post proves his well-known {\em Coset theorem}. Many basic
properties of polyadic groups are studied in this paper. The
articles \cite{Dor} and \cite{Kas} are among the first materials
written on the polyadic groups. Reader who knows Russian, can use
the book of Galmak \cite{Gal}, for an almost complete description of
polyadic groups and as we understand from its English abstract, some
of our results are also proved in that book by a different method.
The articles \cite{Art}, \cite{Dud1}, \cite{Gal2}, and \cite{Gle}
can be used for study of axioms of polyadic groups as well as their
varieties.

Note that an $n$-ary system $(G,f)$ of the form $\,f(x_1^n)=x_1
x_2\ldots x_nb$, where $(G,\cdot)$ is a group and $b$  a fixed
element belonging to the center of $(G,\cdot)$, is an $n$-ary group.
Such an $n$-ary group is called {\em $b$-derived} from the group
$(G,\cdot)$ and it is denoted by $der_b(G, \cdot)$. In the case when $b$ is the identity of $(G,\cdot)$ we
say that such a polyadic group is {\em reduced} to the group $(G, \cdot
)$ or {\em derived} from $(G, \cdot)$ and we use the notation $der(G, \cdot)$ for it. But for every $n>2$, there
are $n$-ary groups which are not derived from any group. An $n$-ary
group $(G,f)$ is derived from some group if and only if it contains
an element $a$ (called an {\em $n$-ary identity}) such that
$$
 f(\stackrel{(i-1)}{a},x,\stackrel{(n-i)}{a})=x
$$
holds for all $x\in G$ and for all $i=1,\ldots,n$.

From the definition of an $n$-ary group $(G,f)$ we can directly see
that for every $x\in G$ there exists only one $y\in G$ satisfying
the equation
$$
f(\stackrel{(n-1)}{x},y)=x.
$$
This element is called {\em skew} to $x$ and it is denoted by
$\overline{x}$.    As D\"ornte proved (see a \cite{Dor}), the
following identities hold for all $\,x,y\in G$, $2\leq i,j\leq n$
and $1\leq k\leq n$
$$
f(\stackrel{(i-2)}{x},\overline{x},\stackrel{(n-i)}{x},y)=
f(y,\stackrel{(n-j)}{x},\overline{x},\stackrel{(j-2)}{x})=y,
$$
$$
f(\stackrel{(k-1)}{x},\overline{x},\stackrel{(n-k)}{x})=x.
$$

Suppose $(G, f)$ is a polyadic group and $a\in G$ is a fixed
element. Define a binary operation
$$
x\ast y=f(x,\stackrel{(n-2)}{a},y).
$$
Then $(G, \ast)$ is an ordinary group, called the {\em retract} of
$(G, f)$ over $a$. Such a retract will be denoted by $ret_a(G,f)$. All
retracts of a polyadic group are isomorphic, see \cite{DM}. The
identity of the group $(G,\ast)$ is $\overline{a}$. One can verify
that the inverse element to $x$ has the form
$$
y=f(\overline{a},\stackrel{(n-3)}{x},\overline{x},\overline{a}).
$$

One of the most fundamental theorems of polyadic group is the
following, now known as {\em Hossz\'{u} -Gloskin's theorem}. We will use
it frequently in this article and the reader can use \cite{DG},
\cite{DM1}, \cite{Hos} and \cite{Sok} for detailed discussions.

\begin{theorem}
Let $(G,f)$ be an $n$-ary group. Then

1. on $G$ one can define an operation $\cdot$ such
that $(G,\cdot)$ is a group,

2.  there exist an automorphism $\theta$ of $(G,\cdot)$ and $b\in G$, such
that $\theta(b)=b$,

3.  $\theta^{n-1}(x)=b x b^{-1}$, for every $x\in
G$,

4.  $f(x_1^n)=x_1\theta(x_2)\theta^2(x_3)\cdots\theta^{n-1}(x_n)b$, for
all $x_1,\ldots,x_n\in G$.
\end{theorem}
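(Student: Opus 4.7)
The plan is to build the binary group structure from a retract, then read off the automorphism $\theta$ and the element $b$ from the values of $f$ on constant tuples. First I would fix an element $c\in G$ and take $(G,\cdot):=ret_c(G,f)$; by the general discussion of retracts recalled above, this is an ordinary group with identity $e:=\overline{c}$ and product $x\cdot y=f(x,\stackrel{(n-2)}{c},y)$, which settles part~1 at once.

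Next, I would guess the correct $b$ and $\theta$ by substituting constants into the target identity of part~4. Setting every argument equal to $e$ forces $b:=f(\stackrel{(n)}{e})$, while setting $e$ in every slot but the $k$-th predicts
\[
\theta^{k-1}(x)\cdot b \;=\; f(\stackrel{(k-1)}{e},x,\stackrel{(n-k)}{e}) \qquad (1\leq k\leq n).
\]
I would take the $k=2$ instance, namely $\theta(x):=f(e,x,\stackrel{(n-2)}{e})\cdot b^{-1}$, as the definition, and then verify the displayed identity for every $k$ by induction, using the associativity of $f$ together with the D\"ornte identities recalled in the introduction.

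The remaining work splits into four checks, which I plan to carry out in order: (a) $\theta$ is a homomorphism of $(G,\cdot)$, and in fact an automorphism, because an $n$-ary inverse visibly supplies a two-sided inverse; (b) $\theta(b)=b$, which reduces to comparing $f(\stackrel{(n)}{e})$ with itself after one displacement of $e$'s; (c) $\theta^{n-1}(x)=bxb^{-1}$, obtained by applying the displayed identity at $k=n$ and comparing $f(\stackrel{(n-1)}{e},x)$ with $f(x,\stackrel{(n-1)}{e})$ by means of the last D\"ornte identity; and finally (d) the decomposition formula of part~4, which I would prove by reducing $f(x_1^n)$ one argument at a time, pulling each $x_i$ out of $f$ via associativity, replacing it inside $f$ by $e$, and compensating with the factor $\theta^{i-1}(x_i)$ in the binary product.

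I expect (d) to be the main obstacle: it requires juggling several nested applications of associativity of $f$ with repeated cancellation of $b$ and $b^{-1}$, and it is easy to mis-track the exponents of $\theta$. Once (a)--(c) are in place, however, (d) should reduce to a mechanical induction on the number of slots of $f(x_1^n)$ that are not already equal to $e$. This is the standard Hossz\'u--Gluskin strategy; its detailed execution can be found in the references \cite{DG}, \cite{DM1}, \cite{Hos}, \cite{Sok} cited above.
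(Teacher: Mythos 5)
The paper does not actually prove this theorem: it is quoted as the classical Hossz\'u--Gluskin theorem, with the proof delegated to \cite{DG}, \cite{DM1}, \cite{Hos} and \cite{Sok}, so there is no internal argument to compare yours against. Your outline is the standard route those references take, and your forced choices are consistent with the explicit formulas the paper itself records at the end of Section 4 (Sokolov's version: for the retract over $a$ one has $c=f(\stackrel{(n)}{\overline{a}})$ and $\phi(x)=f(\overline{a},x,\stackrel{(n-2)}{a})$, which, by the uniqueness of the decomposition once the retract is fixed, must agree with your $b=f(\stackrel{(n)}{e})$ and $\theta(x)=f(e,x,\stackrel{(n-2)}{e})\cdot b^{-1}$).

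That said, what you have written is a plan, not a proof. Every substantive verification is named but not carried out: (a) that $\theta$ is multiplicative is not ``visible'' --- it needs a specific associativity rearrangement together with the neutrality of the sequences $(\stackrel{(n-2)}{c},\overline{c})$ and $(\overline{c},\stackrel{(n-2)}{c})$ coming from the D\"ornte identities; the induction establishing $\theta^{k-1}(x)\cdot b=f(\stackrel{(k-1)}{e},x,\stackrel{(n-k)}{e})$ is exactly the shift argument that constitutes the core of the theorem and is asserted rather than performed; and step (d), which you correctly identify as the main obstacle, is explicitly handed off to the literature. So as a blind proof attempt this is a correct identification of the strategy with the hard content still outstanding; to count as a proof of the statement it would need the computations in (a)--(d) written out, or else it is no more self-contained than the paper's own citation of the result.
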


According to this theorem, we  use the notation $\Gf$ for $(G,f)$
and we say that $(G,f)$ is $(\theta, b)$-derived from the group $(G,
\cdot)$. During this paper, we will assume that $(G, f)=\Gf$.

Varieties of polyadic groups and the structure of congruences on
polyadic groups are studied in \cite{Art}, \cite{Dud1} and
\cite{Usan}. It is proved that all congruences on polyadic groups
are commute and so the lattice of congruences is modular. In this
article we will give a very simple proof for this fact. Among other
issues, investigated on polyadic groups, is the representation
theory. In \cite{Dud-Shah} the representation theory of polyadic
groups is studied and \cite{Shah} contains generalizations of some
important theorems of character theory of finite groups to the case
of polyadic groups.

We established some  fundamental results on the structure of
homomorphisms of polyadic groups in \cite{Khod-Shah}. We will use
the main result of that article here as a very strong tool. Suppose
$(H, \ast)$ is an ordinary group and $a\in H$. In the following
theorem, we denote the inner automorphism $x\mapsto a\ast x\ast
a^{-1}$ by $I_a$. Also $R_a$ denotes the map $x\mapsto x\ast a$.

\begin{theorem}
Suppose $(G,f)=der_{\theta, b}(G, \cdot)$ and $(H, h)=der_{\eta,
c}(H, \ast)$ are two polyadic groups. Let $\psi: (G,f)\to (H, h)$ be
a homomorphism. Then there exists $a\in H$ and an ordinary
homomorphism $\phi:
(G, \cdot)\to (H, \ast)$, such that $\psi=R_a\phi$. Further $a$ and $\phi$ satisfy the following conditions;\\
$$
h(\stackrel{(n)}{a})=\phi(b)\ast a\ \ \ and \ \ \
\phi\theta=I_{a}\eta\phi.
$$
Conversely, if $a$ and $\phi$ satisfy the above two conditions, then
$\psi=R_a\phi$ is a homomorphism $(G,f)\to  (H,h)$.
\end{theorem}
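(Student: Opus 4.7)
The plan is to exploit the Hossz\'u-Gloskin decompositions of both structures directly. Write $e_G$ and $e_H$ for the identities of $(G,\cdot)$ and $(H,\ast)$, so that $\theta(e_G)=e_G$ and $\eta(e_H)=e_H$ automatically. For the forward direction, the key normalisation is to set
$$a := \psi(e_G), \qquad \phi(x) := \psi(x)\ast a^{-1},$$
so that $\psi = R_a\phi$ and $\phi(e_G) = e_H$ hold tautologically. What remains is to prove that $\phi$ is a group homomorphism and to extract the two compatibility conditions. Rewriting the polyadic homomorphism identity $\psi(f(x_1^n)) = h(\psi(x_1),\ldots,\psi(x_n))$ via Hossz\'u-Gloskin yields
$$\phi\bigl(x_1\theta(x_2)\cdots\theta^{n-1}(x_n)\cdot b\bigr)\ast a = \phi(x_1)\ast a\ast\eta\bigl(\phi(x_2)\ast a\bigr)\ast\cdots\ast\eta^{n-1}\bigl(\phi(x_n)\ast a\bigr)\ast c.$$
I would then extract all three claims by a short sequence of substitutions into this relation. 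Setting $x_1=\cdots=x_n=e_G$ collapses both sides and produces $\phi(b)\ast a = h(\stackrel{(n)}{a})$, the first compatibility condition. Setting $x_1=x$ and $x_i=e_G$ for $i\ge 2$ yields the auxiliary identity $\phi(x\cdot b) = \phi(x)\ast\phi(b)$. Setting $x_2=y$ and $x_i=e_G$ otherwise, and using the first condition to collapse the tail of the right-hand side, gives $\phi(\theta(y)\cdot b) = I_a\eta\phi(y)\ast\phi(b)$; cancelling $\phi(b)$ via the auxiliary identity produces the second compatibility condition $\phi\theta = I_a\eta\phi$. Finally, setting $x_1=x$, $x_2=y$, $x_i=e_G$ for $i\ge 3$ and combining the two previous identities yields $\phi(x\cdot\theta(y)) = \phi(x)\ast\phi(\theta(y))$; since $\theta$ is a bijection of $G$, full multiplicativity of $\phi$ follows.

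For the converse, assume $\phi$ and $a$ satisfy the two conditions and set $\psi := R_a\phi$. Iterating $\phi\theta = I_a\eta\phi$ together with the elementary identity $\eta\circ I_a = I_{\eta(a)}\circ\eta$ gives
$$\phi\theta^k = I_{A_k}\circ\eta^k\circ\phi,\qquad A_k := a\ast\eta(a)\ast\cdots\ast\eta^{k-1}(a).$$
Expanding $\psi(f(x_1^n))$ via Hossz\'u-Gloskin and multiplicativity of $\phi$ produces $\phi(x_1)\ast\phi\theta(x_2)\ast\cdots\ast\phi\theta^{n-1}(x_n)\ast\phi(b)\ast a$; substituting the formula for $\phi\theta^k$, the telescoping identities $A_k^{-1}\ast A_{k+1} = \eta^k(a)$, together with the rearrangement $A_n\ast c = \phi(b)\ast a$ of the first condition, rewrite this as the Hossz\'u-Gloskin expansion of $h(\psi(x_1),\ldots,\psi(x_n))$ term by term.

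The main obstacle I anticipate is the bookkeeping in the converse: the conjugations by $A_k$ and the trailing factor $\phi(b)\ast a$ must reconcile into the $h$-side via the telescoping, which is clean but easy to mishandle. In the forward direction the essential insight is comparatively small -- one must realise that $\psi(e_G)$ need not equal the identity of $(H,\ast)$, so right-normalising by its inverse is the natural path to a genuine binary group homomorphism, after which the entire argument reduces to targeted substitutions in one master identity.
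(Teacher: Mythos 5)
The paper does not actually prove this statement: it is quoted as the main result of the authors' companion paper \cite{Khod-Shah} and used here as an imported tool, so there is no in-text proof to compare yours against. Judged on its own, your argument is correct and complete. The forward direction's normalization $a=\psi(e_G)$, $\phi=R_{a^{-1}}\psi$ is the right move, and your four substitutions into the master identity are consistent and non-circular: the all-$e_G$ substitution gives $h(\stackrel{(n)}{a})=\phi(b)\ast a$; the $x_1=x$ substitution then collapses the tail $a\ast\eta(a)\ast\cdots\ast\eta^{n-1}(a)\ast c$ to $\phi(b)\ast a$ and yields $\phi(xb)=\phi(x)\ast\phi(b)$; the $x_2=y$ substitution combined with these gives $\phi\theta=I_a\eta\phi$; and the $x_1=x$, $x_2=y$ substitution plus surjectivity of $\theta$ gives full multiplicativity. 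The converse computation with $\phi\theta^k=I_{A_k}\eta^k\phi$, the telescoping $A_k^{-1}\ast A_{k+1}=\eta^k(a)$, and the identification $A_{n-1}^{-1}\ast\phi(b)\ast a=\eta^{n-1}(a)\ast c$ from the first condition reassembles exactly the Hossz\'u--Gluskin expansion of $h(\psi(x_1),\ldots,\psi(x_n))$, and preservation of the skew element comes for free since it is equationally definable. This is the standard route one would expect the cited reference to take; I see no gap.
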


Clearly, using the above theorem, we can determine when two polyadic
groups $(G,f)=der_{\theta, b}(G, \cdot)$ and $(H, h)=der_{\eta,
c}(H, \ast)$ are isomorphic,  and also, it can be applied for a
complete description of polyadic representations, see
\cite{Khod-Shah} for details. Using this theorem, we will determine
the structure of polyadic subgroups in the section 4.

Before going to explanation of the motivations for the recent work,
we recall the definition of normal polyadic subgroups from
\cite{Dud-Shah}. An $n$-ary subgroup $H$ of a polyadic group $(G,f)$
is called {\em normal} if
$$
f(\overline{x},\stackrel{(n-3)}{x},h,x)\in H
$$
for all $h\in H$ and $x\in G$. If every normal subgroup of $(G, f)$
is singleton or equal to $G$, then we say that $(G, f)$ is {\em
group theoretically simple} or it is $GTS$ for short. If $H=G$ is
the only normal subgroup of $(G, f)$, then we say it is {\em
strongly simple in the group theoretic sense} or $GTS^{\ast}$ for
short. For any normal subgroup $H$ of an $n$-ary group $(G,f)$, we
define the relation $\sim_H$ on $G$, by

$$
 x\sim_H y\ \Longleftrightarrow\ \exists h_1, \ldots, h_{n-1}\in H:\
y=f(x,h_1^{n-1}).
$$
Now, it is easy to see that such
defined relation is an equivalence on $G$. The equivalence class of
$G$, containing $x$ is denoted by $xH$ and is called the {\it left
coset} of $H$ with the representative $x$. On the set $G/H=\{xH: x\in
G\}$, we introduce the operation
$$
f_H(x_1H,x_2H,\ldots,x_nH)=f(x_1^n)H.
$$
Then $(G/H,f_H)$ is an $n$-ary group derived from the group
$ret_H(G/H,f_H)$, see \cite{Dud-Shah}. One of the main aims of this
article is to classify all $GTS$ polyadic groups. We will give a
necessary and sufficient condition for a polyadic group $(G, f)$ to
be $GTS$ in terms of the ordinary group $(G, \cdot)$ and the
automorphism $\theta$. It is possible to define another kind of
simpleness for polyadic groups,  universal algebraically simpleness.
Note that an equivalence relation $R$ over $G$ is said to be a {\em
congruence}, if

1. $\forall i:\ x_iRy_i\Rightarrow f(x_1^n)Rf(y_1^n)$,

2. $xRy\Rightarrow \overline{x}R\overline{y}$.

For example, if $H$ is a normal polyadic subgroup of $(G, f)$, then
$R=\sim_H$ is a congruence, see \cite{Dud-Shah}. We say that $(G, f)$ is {\em universal
algebraically simple} or $UAS$ for short, if the only congruence is
the {\em equality} and $G\times G$. We also, will give a
classification of $UAS$ polyadic groups and we will prove that
$UAS\subseteq GTS$.

Our motivation to  study of simple polyadic groups came from $n$-Lie
algebras (or Filippov algebras). A vector space $L$ over a field
$\mathbb{F}$ is an $n$-ary Lie algebra, if it is equipped with an
alternative $n$-linear map $[-, \cdots,-]:L^n\to L$ such that the
following {\em Jacobi identity} holds
$$
\sum_{i=1}^n[x_1, \ldots, x_{i-1}, [x_i, y_1, \ldots, y_{n-1}],
x_{i+1}, \ldots, x_n]=0.
$$
The case $n=2$ is ordinary Lie algebra. The notions such as,
subalgebra and ideal can be defined as usual, see \cite{Fil}. For
$n>2$, it is proved that there is only one  simple $n$-ary Lie
algebra and the dimension of this unique simple Lie algebra is
$n+1$, see \cite{Bai} for details. This fact is a large difference
between ordinary  and $n$-ary Lie algebras, because there are lots
of simple ordinary Lie algebras. The case of $n$-ary Lie algebras,
may suggest that similarly there are a few simple polyadic groups
for $n>2$. But as we shall see, this is not true and there are many
simple $n$-ary groups (both $UAS$ and $GTS$), even more than the
ordinary simple groups. This fact, suggests us that there might be a
more general definition of $n$-ary Lie algebras which is not
discovered until today. In our opinion, the recent definition of
Filippov for $n$-ary Lie algebras is only a small piece of an
unknown algebraic structure. It may be logically true way if we look
for these general notion of real $n$-Lie algebras via studying {\em
polyadic Lie groups}.

\section{Elementary notions}
We assume that our polyadic group has the form $(G, f)=\Gf$. The
identity element of $(G, \cdot)$ will be denoted by $e$. First we
express the skew element $\overline{x}$ in terms of $x$ and
$\theta$.
\begin{lemma}
We have
$$
\overline{x}=b^{-1}\theta^{n-2}(x^{-1})\cdots
\theta^2(x^{-1})\theta(x^{-1}).
$$
\end{lemma}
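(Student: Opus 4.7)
The plan is to exploit the uniqueness of the skew element: $\overline{x}$ is characterized as the unique $y \in G$ satisfying $f(\stackrel{(n-1)}{x}, y) = x$. So it suffices to substitute the proposed expression into this defining equation and verify equality, after which uniqueness forces it to be $\overline{x}$.

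First I would expand $f(\stackrel{(n-1)}{x}, \overline{x})$ using the Hossz\'u-Gloskin form (Theorem 1.1, item 4):
$$
f(\stackrel{(n-1)}{x}, \overline{x}) = x\,\theta(x)\,\theta^2(x)\cdots\theta^{n-2}(x)\,\theta^{n-1}(\overline{x})\,b.
$$
Setting this equal to $x$ and solving formally in the ordinary group $(G,\cdot)$ shows that $\overline{x}$ must satisfy
$$
\theta^{n-1}(\overline{x}) = \theta^{n-2}(x^{-1})\,\theta^{n-3}(x^{-1})\cdots\theta(x^{-1})\,b^{-1}.
$$
So the proof reduces to checking that the proposed $\overline{x} = b^{-1}\theta^{n-2}(x^{-1})\cdots\theta^2(x^{-1})\theta(x^{-1})$ satisfies exactly this identity after applying $\theta^{n-1}$.

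To carry out the check I would use two tools from Theorem 1.1: the centrality-like relation $\theta^{n-1}(y) = byb^{-1}$ for all $y \in G$ (item 3), and the fact $\theta(b) = b$ (item 2), which gives $\theta^{n-1}(b) = b$ and hence $\theta^{n-1}(b^{-1}) = b^{-1}$. Applying $\theta^{n-1}$ termwise to the proposed formula, each factor $\theta^i(x^{-1})$ becomes $\theta^{n-1+i}(x^{-1}) = \theta^i\bigl(\theta^{n-1}(x^{-1})\bigr) = \theta^i(bx^{-1}b^{-1}) = b\,\theta^i(x^{-1})\,b^{-1}$, since $\theta$ fixes $b$. Then the internal $b$ and $b^{-1}$ factors telescope, leaving
$$
\theta^{n-1}(\overline{x}) = \theta^{n-2}(x^{-1})\,\theta^{n-3}(x^{-1})\cdots\theta(x^{-1})\,b^{-1},
$$
which is precisely what was needed.

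The only real obstacle is careful index bookkeeping: tracking the $n-2$ factors, keeping the order straight when applying the antihomomorphism $(\,)^{-1}$ and the homomorphism $\theta^{n-1}$, and ensuring the conjugating $b$'s cancel in the correct pattern. Beyond that, the argument is a short direct computation.
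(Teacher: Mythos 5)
Your proof is correct and follows essentially the same strategy as the paper: substitute the candidate into a characterizing equation for the skew element, expand via the Hossz\'u--Gluskin form, and invoke uniqueness of the solution. The only difference is that the paper places the candidate in the \emph{first} slot, verifying $f(y,\stackrel{(n-1)}{x})=x$ (which characterizes $\overline{x}$ by D\"ornte's identities), so the product telescopes immediately with no need to apply $\theta^{n-1}$; your use of the literal defining equation $f(\stackrel{(n-1)}{x},y)=x$ costs the extra step with $\theta^{n-1}(y)=byb^{-1}$ and $\theta(b)=b$, but is equally valid.
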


\begin{proof}
Suppose $y=b^{-1}\theta^{n-2}(x^{-1})\cdots
\theta^2(x^{-1})\theta(x^{-1})$. Then we have
\begin{eqnarray*}
f(y, \stackrel{(n-1)}{x})&=&y\theta(x)\theta^2(x)\cdots \theta^{n-1}(x)b\\
                        &=&b^{-1}\theta^{n-1}(x)b\\
                        &=&b^{-1}bxb^{-1}b\\
                        &=&x.
\end{eqnarray*}
This shows that $y=\overline{x}$.
\end{proof}

\begin{definition}
Suppose $H\leq (G, f)$ and for all $x\in G$ and $h\in H$
$$
f(\overline{x},\stackrel{(n-3)}{x}, h, x)\in H.
$$
Then $H$ is called a normal polyadic subgroup of and it is denoted
by $H\unlhd (G, f)$.
\end{definition}

\begin{lemma}
Assume that $H\leq (G, f)$. Then $H$ is normal, iff for all $x\in G$
and $h\in H$, we have $\theta^{-1}(x^{-1}h)x\in H$.
\end{lemma}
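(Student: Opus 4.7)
The proof is essentially a direct computation showing that
$$
f(\overline{x},\stackrel{(n-3)}{x},h,x)=\theta^{-1}(x^{-1}h)\,x,
$$
from which the ``iff'' is immediate because $H$ is a subset of $G$ and the left side lies in $H$ precisely when the right side does.

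To carry this out, I would first expand the left side using the Hossz\'u--Gloskin formula from Theorem~1.1(4). The arguments are $\overline{x}$ in slot $1$, then $x$ in slots $2,\ldots,n-2$, then $h$ in slot $n-1$, then $x$ in slot $n$, so
$$
f(\overline{x},\stackrel{(n-3)}{x},h,x)=\overline{x}\cdot\theta(x)\theta^2(x)\cdots\theta^{n-3}(x)\cdot\theta^{n-2}(h)\cdot\theta^{n-1}(x)\cdot b.
$$
Next I would substitute the formula for $\overline{x}$ from the preceding lemma,
$$
\overline{x}=b^{-1}\theta^{n-2}(x^{-1})\theta^{n-3}(x^{-1})\cdots\theta^2(x^{-1})\theta(x^{-1}),
$$
and observe that the block $\theta(x^{-1})\cdot\theta(x)\theta^2(x)\cdots\theta^{n-3}(x)$ telescopes in reverse: $\theta(x^{-1})\theta(x)$ cancels, then $\theta^2(x^{-1})\theta^2(x)$ cancels, and so on, leaving exactly $b^{-1}\theta^{n-2}(x^{-1})$. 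Hence
$$
f(\overline{x},\stackrel{(n-3)}{x},h,x)=b^{-1}\theta^{n-2}(x^{-1}h)\,\theta^{n-1}(x)\,b.
$$

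The remaining step uses the two structural facts from Theorem~1.1: $\theta^{n-1}(x)=bxb^{-1}$ and $\theta(b)=b$. The first gives $\theta^{n-1}(x)b=bx$, so the expression becomes $b^{-1}\theta^{n-2}(x^{-1}h)\,b\,x$. The second means $\theta$ fixes $b$, so conjugation by $b$ commutes with $\theta$; since conjugation by $b$ is $\theta^{n-1}$, conjugation of any $y\in G$ by $b^{-1}$ on the left (and $b$ on the right) sends $\theta^{n-2}(y)$ to $\theta^{-1}(y)$. Applying this with $y=x^{-1}h$ yields $b^{-1}\theta^{n-2}(x^{-1}h)b=\theta^{-1}(x^{-1}h)$, and therefore
$$
f(\overline{x},\stackrel{(n-3)}{x},h,x)=\theta^{-1}(x^{-1}h)\,x,
$$
which proves the lemma.

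I do not expect a real obstacle; the only place where care is needed is the index bookkeeping in the telescoping cancellation and the verification that $\theta$ commutes with conjugation by $b$ (which in turn rests on $\theta(b)=b$). Both are mechanical once the Hossz\'u--Gloskin formula and the explicit form of $\overline{x}$ are in hand.
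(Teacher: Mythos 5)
Your proposal is correct and is essentially identical to the paper's proof: both expand $f(\overline{x},\stackrel{(n-3)}{x},h,x)$ via the Hossz\'u--Gluskin formula, substitute the expression for $\overline{x}$ from the preceding lemma, telescope the middle factors, and then use $\theta^{n-1}(y)=byb^{-1}$ together with $\theta(b)=b$ to convert $b^{-1}\theta^{n-2}(x^{-1}h)b$ into $\theta^{-1}(x^{-1}h)$. The index bookkeeping and the final conjugation step are carried out exactly as in the paper.
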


\begin{proof}
Note that
\begin{eqnarray*}
f(\overline{x},\stackrel{(n-3)}{x}, h, x)&=&b^{-1}\theta^{n-2}(x^{-1})\cdots \theta(x^{-1})\theta(x)\cdots\theta^{n-3}(x)\theta^{n-2}(h)bx\\
                                       &=&b^{-1}\theta^{n-2}(x^{-1})\theta^{n-2}(h)bx\\
                                       &=&b^{-1}\theta^{n-2}(x^{-1}h)bx\\
                                       &=&b^{-1}\theta^{n-1}(\theta^{-1}(x^{-1}h))bx\\
                                       &=&b^{-1}b\theta^{-1}(x^{-1}h)b^{-1}bx\\
                                       &=&\theta^{-1}(x^{-1}h)x,
\end{eqnarray*}
and the lemma is proved.
\end{proof}

\begin{definition}
An equivalence relation $R$ over $G$ is said to be a {\em
congruence}, if

1. $\forall i:\ x_iRy_i\Rightarrow f(x_1^n)Rf(y_1^n)$,

2. $xRy\Rightarrow \overline{x}R\overline{y}$.
\end{definition}

We denote the set of all congruences of $(G, f)$ by $Cong(G, f)$.
The following theorem is proved in \cite{Dud-Shah}.

\begin{theorem}
Suppose $H\unlhd (G, f)$ and define $R=\sim_H$ by
$$
x\sim_Hy \Leftrightarrow \exists h_1, \ldots, h_{n-1}\in H: y=f(x,
h_1^{n-1}).
$$
Then $R$ is a congruence and if we let $xH=[x]_R$, (the equivalence
class of $x$), then the set $G/H=\{ xH:x\in G\}$ is an $n$-ary group
with the operation
$$
f_H(x_1H, \ldots, x_nH)=f(x_1^n)H.
$$
Further we have
$$
(G/H, f_H)=der(ret_H(G/H, f_H)),
$$
and so it is reduced.
\end{theorem}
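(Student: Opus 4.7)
The plan is to push everything through the Hossz\'u--Gloskin presentation $(G,f)=der_{\theta,b}(G,\cdot)$ and the ordinary-group reformulation of normality given by Lemma~2.5, so that $\sim_H$ becomes (up to a constant right-translation by $b$) ordinary coset equivalence for a normal $\theta$-invariant subgroup of $(G,\cdot)$. The whole theorem then reduces to book-keeping in $(G,\cdot)$.

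First, I would unpack Lemma~2.5. Setting $x=e$ in $\theta^{-1}(x^{-1}h)x\in H$ gives $\theta^{-1}(H)\subseteq H$; combining with closure of $H$ under the skew $\overline{(\cdot)}$ and the explicit formula of Lemma~2.3, I would deduce $\theta(H)=H$ and that $H$ is moreover a subgroup of $(G,\cdot)$ which is normal in the usual sense. Once that is in place, the identity $y=f(x,h_1^{n-1})=x\,\theta(h_1)\theta^2(h_2)\cdots\theta^{n-1}(h_{n-1})\,b$, together with $\theta$-invariance of $H$, immediately shows that the set $\{f(x,h_1^{n-1}) : h_i\in H\}$ is exactly $xHb$. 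Hence $x\sim_H y$ is equivalent to $yb^{-1}\in xH$, i.e.\ to the ordinary coset equivalence for $H$ pulled back along right multiplication by $b^{-1}$. Reflexivity, symmetry and transitivity are then automatic, and $[x]_R=xHb$.

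Next, to check $R\in\mathrm{Cong}(G,f)$, I would assume $y_i=x_ih_ib$ with $h_i\in H$ and expand
\[
f(y_1^n)=y_1\theta(y_2)\cdots\theta^{n-1}(y_n)b;
\]
using normality of $H$ in $(G,\cdot)$ and $\theta(H)=H$, each factor $\theta^{i-1}(h_ib)$ can be commuted past the preceding $x_j$'s into a single element of $H$ on the right, yielding $f(y_1^n)\in f(x_1^n)Hb$. The skew condition follows the same way from the formula for $\overline{x}$ in Lemma~2.3. Well-definedness, associativity and unique solvability for $f_H$ on $G/H$ then descend from $(G,f)$ essentially for free.

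The main remaining point is reducedness. To prove $(G/H,f_H)=der(ret_H(G/H,f_H))$ it suffices to exhibit an $n$-ary identity, i.e.\ an element $[a]\in G/H$ satisfying $f_H([a]^{(i-1)},[x],[a]^{(n-i)})=[x]$ for all $i$ and $x$. The natural candidate is $[b^{-1}]$: since $\theta(b)=b$ we have $\theta^{j}(b^{-1})=b^{-1}$ for every $j$, so
\[
f(b^{-1},\ldots,b^{-1},x,b^{-1},\ldots,b^{-1}) = b^{-(i-1)}\theta^{i-1}(x)b^{-(n-i)}b,
\]
and using $\theta^{n-1}(z)=bzb^{-1}$ iteratively (or equivalently $\theta^{i-1}(x)\equiv x\pmod H$, which will be the key verification that I expect to be the main obstacle and which I would establish from the normality/$\theta$-invariance of $H$ together with the fixed point $\theta(b)=b$), one checks this expression lies in $xHb=[x]$. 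Thus $[b^{-1}]$ is an $n$-ary identity in $(G/H,f_H)$, and by the characterization of derived polyadic groups recalled in the introduction $(G/H,f_H)$ is reduced to its retract, as claimed.
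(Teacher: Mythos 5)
The paper itself gives no proof of this theorem (it is quoted from the reference [Dud-Shah]), so your argument must stand on its own, and it breaks at the very first step. You assert that a normal polyadic subgroup $H$ is a $\theta$-invariant \emph{normal subgroup of the binary group} $(G,\cdot)$. That is false in general: a polyadic subgroup need not contain the identity $e$ of $(G,\cdot)$. Concretely, take $n=3$, $(G,\cdot)=(\mathbb{Z},+)$, $\theta=\mathrm{id}$, $b=1$, so that $f(x,y,z)=x+y+z+1$, and let $H=1+3\mathbb{Z}$. Then $H$ is closed under $f$ and under the skew operation $\overline{x}=-x-1$, and the normality condition of Lemma 2.3 reads $(-x+h)+x=h\in H$, so $H\unlhd(G,f)$; yet $0\notin H$ and $H$ is not a subgroup of $(\mathbb{Z},+)$. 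The correct structural fact, which this paper only establishes later (Corollary 4.3), is that $H=Ku$ for some $u\in H$, where $K=H\cdot u^{-1}$ is a $\theta$-invariant normal subgroup of $(G,\cdot)$ on which $\theta$ induces the \emph{inner} automorphism $I_{uK}$ of $G/K$ -- not the identity.

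Everything downstream inherits this error. The class $[x]_R=\{f(x,h_1^{n-1}):h_i\in H\}$ is not $xHb$: in the example it equals $x+3\mathbb{Z}$, whereas $xHb=x+2+3\mathbb{Z}$, which does not even contain $x$ (note $x\in xHb$ would force $b^{-1}\in H$). The correct description is $[x]_R=xK(\overline{u})^{-1}u$. Consequently your congruence check, which assumes $y_i=x_ih_ib$, and your proposed $n$-ary identity $[b^{-1}]$ of $(G/H,f_H)$ both fail; in the example $f(-1,x,-1)=x-1\not\sim_H x$, and the actual $n$-ary identity is the class $H$ itself. Your fallback claim $\theta^{i-1}(x)\equiv x$ modulo the subgroup is also unavailable: what holds is $\theta(x)K=u^{-1}xuK$. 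The overall strategy -- translate to the binary group and do coset bookkeeping -- is workable, but only after replacing $H$ by the genuine normal subgroup $K=H\cdot u^{-1}$, the constant $b$ by $(\overline{u})^{-1}u$, and ``$\theta$ acts trivially mod $K$'' by ``$\theta$ acts as $I_{uK}$ mod $K$''; as written the proof does not go through.
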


\begin{definition}
A polyadic group $(G, f)$ is {\em group theoretically simple}, or
$GTS$, if
$$
H\unlhd (G, f)\Rightarrow |H|=1\ \ or\ \ H=G.
$$
We say that $(G, f)$ is {\em universal algebraically simple}, or
$UAS$, if
$$
R\in Cong(G, f)\Rightarrow R=\Delta\ \ or\ \ R=G\times G,
$$
where $\Delta$ denotes equality.
\end{definition}

\begin{proposition}
Every $UAS$ is also $GTS$.
\end{proposition}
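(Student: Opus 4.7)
The plan is a direct contrapositive. Given any normal polyadic subgroup $H\unlhd(G,f)$, Theorem 2.4 already produces a congruence $R=\sim_H$ on $(G,f)$. Applying the UAS hypothesis to this congruence forces $R=\Delta$ or $R=G\times G$, and my goal is to translate these two alternatives into the conditions $|H|=1$ and $H=G$ respectively. The bridge between the two notions will be the observation that $H$ itself coincides with a single $\sim_H$-equivalence class.

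To establish this, I would fix any $h_0\in H$ and show $[h_0]_{\sim_H}=H$. The inclusion $[h_0]_{\sim_H}\subseteq H$ is immediate from closure of $H$ under the $n$-ary operation $f$: any $y=f(h_0,h_1^{n-1})$ with $h_i\in H$ automatically lies in $H$. For the reverse inclusion, given $h\in H$, I would exploit the fact that $(H,f|_H)$ is itself a polyadic group; picking any $h_2,\ldots,h_{n-1}\in H$, the polyadic-group solvability axiom applied inside $H$ produces $h_1\in H$ with $f(h_0,h_1,h_2^{n-1})=h$, i.e.\ $h\sim_H h_0$.

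With this identification in hand, the two UAS alternatives yield the GTS conclusion at once: if $\sim_H=\Delta$ then $H$, being an equivalence class, is a singleton; if $\sim_H=G\times G$ then the unique equivalence class must simultaneously equal $G$ and equal $H$, forcing $H=G$. The only step requiring genuine work is the identification of $H$ with an equivalence class, and that is a short consequence of the definition of polyadic subgroup. Nothing about the Hossz\'u--Gloskin decomposition or the automorphism $\theta$ intervenes here, and I do not foresee any real obstacle.
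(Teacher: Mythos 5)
Your proposal is correct and follows essentially the same route as the paper: both apply the UAS hypothesis to the congruence $\sim_H$ and then use closure of $H$ under $f$ together with solvability inside the polyadic subgroup $H$ to convert $R=\Delta$ into $|H|=1$ and $R=G\times G$ into $H=G$. Your packaging of the two cases via the single observation that $H=[h_0]_{\sim_H}$ is a slightly tidier presentation of the same argument, not a different proof.
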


\begin{proof}
Suppose $(G,f)$ is $UAS$ and $H\unlhd (G, f)$. Let $R=\sim_H$. So we
have $R=\Delta$ or $G\times G$. If $R=\Delta$, then the following
implication is true,
$$
(\exists h_1, \ldots, h_{n-1}\in H: y=f(x, h_1^{n-1}))\Rightarrow
x=y.
$$
Assume that $|H|>1$, so there exist distinct elements $h_1, h_2\in H$.
But then there is $h_3\in H$ such that $h_2=f(h_1, h_3,
\stackrel{(n-2)}{h_1})$. Therefore $h_1=h_2$, a contradiction. Hence
$|H|=1$. On the other hand, if $R=G\times G$, then for any $x, y\in
G$, we have
$$
\exists h_1, \ldots, h_{n-1}\in H: y=f(x, h_1^{n-1}).
$$
So, if we let $x\in H$, then for any $y\in G$, $ y=f(x,
h_1^{n-1})\in H$. This shows that $H=G$.
\end{proof}

A polyadic group $(G, f)$ is {\em strongly GTS}, if $H=G$ is its
only normal polyadic subgroup. The class of such polyadic groups
will be denoted by $GTS^{\ast}$. The next proposition shows that
$GTS^{\ast}$ is the more important part of $GTS$.

\begin{proposition}
If $(G, f)$ has a singleton normal polyadic subgroup, then it is
reduced.
\end{proposition}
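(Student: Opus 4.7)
The plan is to pin down $\theta$ and $b$ from the hypothesis, and then verify that the unique element of the singleton $H$ is an $n$-ary identity; by the criterion recalled in the introduction, this forces $(G,f)$ to be derived from a group.

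Write $H=\{a\}$. By the previous lemma, $H\unlhd (G,f)$ is equivalent to $\theta^{-1}(x^{-1}a)x = a$ for every $x\in G$. First I would set $x=e$ to obtain $\theta(a)=a$. Then the general equation rearranges to $x^{-1}a = \theta(a)\theta(x^{-1}) = a\theta(x)^{-1}$, giving $\theta(x)=a^{-1}xa$; that is, $\theta$ is forced to be the inner automorphism $I_{a^{-1}}$. Next, closure of $\{a\}$ under $f$ gives $f(\stackrel{(n)}{a})=a$, which by the Hossz\'u--Gloskin formula together with $\theta(a)=a$ reduces to $a^n b=a$, so $b=a^{1-n}$.

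With these two pieces of data fixed, it remains to verify that $a$ is an $n$-ary identity of $(G,f)$. Using $\theta^k(a)=a$ for all $k$ and $\theta^{i-1}(x)=a^{-(i-1)}xa^{i-1}$, the Hossz\'u--Gloskin formula yields
$$
f(\stackrel{(i-1)}{a}, x, \stackrel{(n-i)}{a})
= a^{i-1}\,\theta^{i-1}(x)\,a^{n-i}\,b
= x\cdot a^{(i-1)+(n-i)+(1-n)} = x
$$
for every $i\in\{1,\ldots,n\}$. By the characterization recalled in the introduction, this is exactly the condition for $(G,f)$ to be derived from an ordinary group, i.e.\ to be reduced.

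The only non-routine step I see is the inversion from the normality lemma to the identifications $\theta=I_{a^{-1}}$ and $b=a^{1-n}$; after that, everything is a direct manipulation of the Hossz\'u--Gloskin representation, and I do not anticipate a genuine obstacle.
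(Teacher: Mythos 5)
Your proof is correct, but it takes a genuinely different route from the paper. The paper's argument is structural: since $H=\{u\}$ is a singleton normal polyadic subgroup, every coset $xH$ is a singleton, so the quotient map $G\to G/H$ is a bijection and hence an isomorphism of $n$-ary groups; since $(G/H,f_H)=der(ret_H(G/H,f_H))$ by the quotient theorem quoted from [Dud-Shah] (Theorem 2.5 of the paper), $(G,f)$ is reduced. You instead work entirely inside the Hossz\'u--Gluskin presentation: from the normality criterion of Lemma 2.3 applied to $H=\{a\}$ you extract $\theta(a)=a$ and $\theta=I_{a^{-1}}$, from closure of $\{a\}$ under $f$ you get $b=a^{1-n}$, and then a direct computation shows $a$ is an $n$-ary identity, which is the paper's stated characterization of being derived. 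Both arguments are complete; yours has the advantage of being self-contained (it does not invoke the quotient machinery) and of producing explicit data --- it identifies the unique element of $H$ as an $n$-ary identity and pins down $\theta$ and $b$ exactly, which in effect recovers the statement that $(G,f)=der(G_a,\ast)$ with $x\ast y=xa^{-1}y$, i.e.\ the retract-type group $G_a$ of Section 4. The paper's version is shorter but leans on the external result that $(G/H,f_H)$ is always reduced.
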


\begin{proof}
Suppose $H=\{ u\}$ is a polyadic normal subgroup of $(G, f)$. For any $x\in
G$, the coset $xH$ is also singleton and so $xH=\{ x\}$. The map
$\delta:G\to G/H$ defined by $\delta(x)=\{x\}$ is then an
isomorphism and hence
\begin{eqnarray*}
(G, f)&\cong& (G/H, f_H)\\
      &=& der(ret_H(G/H, f_H)),
\end{eqnarray*}
and therefore $(G, f)$ is reduced.
\end{proof}

By the above proposition, if $(G, f)$ is $GTS$ but  not strong,
then $(G, f)=der(G, \cdot)$, where $(G, \cdot)$ is an ordinary
simple group. Hence it remains to determine polyadic groups which
are $GTS^{\ast}$. This will be done in the section 4.

\section{UAS and the lattice of congruences}
In this section we determine the structure of congruences of
polyadic groups and we give a necessary and sufficient condition for
a polyadic group to be $UAS$. Note that the binary group $G\times G$
has an automorphism $(x, y)\mapsto (\theta(x),\theta(y))$ which we
denote it by the same symbol $\theta$. As in the previous section,
$Cong(G, f)$ is the set of all congruences of $(G, f)$. This set is
a lattice under the operations of intersection and product
(composition). We also denote by $Eq(G)$ the set of all equivalence
relations of $G$.

\begin{theorem}
$R\in Cong(G, f)$ iff $R\in Eq(G)$ and $R$ is a $\theta$-invariant
subgroup of $G\times G$.
\end{theorem}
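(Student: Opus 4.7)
My plan is to reduce the claim to the classical parameterization of congruences on ordinary groups by normal subgroups. I would first observe that an equivalence relation $R \subseteq G \times G$ is simultaneously a subgroup of $(G \times G, \cdot)$ if and only if $R = R_N := \{(x,y) : xy^{-1} \in N\}$ for some normal subgroup $N \trianglelefteq (G, \cdot)$, recovered as $N = \{x : (x,e) \in R\}$; moreover such an $R_N$ is $\theta$-invariant precisely when $\theta(N) \subseteq N$. Thus the claim reduces to showing that congruences of $(G,f)$ are exactly the relations $R_N$ for $N$ a $\theta$-invariant normal subgroup of $(G, \cdot)$.

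For the forward direction, I would take a congruence $R$, form the quotient polyadic group $(G/R, f_R)$, and write it via Hossz\'u--Gloskin as $der_{\eta, c}(G/R, \ast)$ for some ordinary group $(G/R, \ast)$, automorphism $\eta$, and $c \in G/R$. Applying Theorem 1.2 to the canonical projection $\pi : (G, f) \to (G/R, f_R)$ then yields some $a \in G/R$ and an ordinary group homomorphism $\phi : (G, \cdot) \to (G/R, \ast)$ with $\pi = R_a \phi$ and $\phi \theta = I_a \eta \phi$. Setting $N = \ker \phi$, the factorization $\pi(x) = \phi(x) \ast a$ yields $xRy \Leftrightarrow \phi(x) = \phi(y) \Leftrightarrow xy^{-1} \in N$, so $R = R_N$. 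Evaluating $\phi \theta = I_a \eta \phi$ at $x \in N$ (and using $\eta(e_\ast) = e_\ast$) forces $\phi(\theta(x)) = e_\ast$, whence $\theta(N) \subseteq N$.

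For the converse, I would start from a $\theta$-invariant normal subgroup $N$, set $R = R_N$, and verify the two congruence axioms directly. Using $f(x_1^n) = x_1 \theta(x_2) \cdots \theta^{n-1}(x_n) b$, the hypotheses $x_i N = y_i N$ together with $\theta$-invariance give $\theta^{i-1}(x_i) N = \theta^{i-1}(y_i) N$, and normality of $N$ combines these into $f(x_1^n) N = f(y_1^n) N$, which is $f(x_1^n) R f(y_1^n)$. The skew axiom $xRy \Rightarrow \bar{x} R \bar{y}$ is dispatched in the same style using the explicit formula $\bar{x} = b^{-1} \theta^{n-2}(x^{-1}) \cdots \theta(x^{-1})$ from Lemma 2.1.

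The hard part is the forward direction, which must extract both the group-theoretic subgroup structure on $R$ and the $\theta$-invariance simultaneously from the polyadic congruence hypothesis. A direct computational attempt, say deriving $\theta$-invariance by choosing specific inputs in axiom 1, runs into cumbersome cancellations of the factor $b$ and of the various powers of $\theta$ appearing in $f$; routing the argument through the structure theorem for polyadic homomorphisms (Theorem 1.2) packages both requirements cleanly in a single stroke.
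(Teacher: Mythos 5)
Your argument is correct, but it takes a genuinely different route from the paper's. The paper proves the forward direction by direct computation: it feeds carefully chosen tuples into the congruence axiom (e.g.\ $x_1=x$, $y_1=y$, all other entries $e$) to extract $xbRyb$, $b^{-1}xRb^{-1}y$ and $\theta(x)R\theta(y)$ one at a time, upgrades $\theta(R)\subseteq R$ to $xRy\Leftrightarrow\theta(x)R\theta(y)$ via $\theta^{n-1}=I_b$, deduces closure under multiplication, and then obtains closure under inverses by an iterated descent on the skew-element formula $\overline{x}=b^{-1}\theta^{n-2}(x^{-1})\cdots\theta(x^{-1})$. You instead reduce the statement to the correspondence $R\leftrightarrow N=\{x:(x,e)\in R\}$ (which is essentially the content of the paper's Proposition 3.2 and Lemma 3.5, proved there \emph{after} this theorem) and then extract everything from Theorem 1.2 applied to the projection $\pi:(G,f)\to(G/R,f_R)$: the kernel of the ordinary homomorphism $\phi$ delivers the normal subgroup and the intertwining relation $\phi\theta=I_a\eta\phi$ delivers $\theta$-invariance in one stroke. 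Your route is conceptually cleaner and avoids the fiddly inverse-closure computation, but it imports two extra ingredients the paper's proof does not need at this point: the fact that the quotient of a polyadic group by a congruence is again a polyadic group (so that Hossz\'u--Gluskin and Theorem 1.2 apply to $(G/R,f_R)$), and the external structure theorem itself. One small remark: your forward direction yields only $\theta(N)\subseteq N$; for the two-sided invariance the paper records ($xRy\Leftrightarrow\theta(x)R\theta(y)$) one should add the observation that $\theta^{n-1}(N)=bNb^{-1}=N$ forces $\theta(N)=N$, though the theorem as literally stated does not require this.
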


\begin{proof}
Let $R\in Con(G,f)$. We prove that $R$ is a $\theta$-invariant subgroup of $G\times G$. Suppose $xRy$ and
$$
x_1=x,\ y_1=y,\ x_2=y_2=e,\ \ldots, x_n=y_n=e.
$$
Now, since $f(x_1^n)Rf(y_1^n)$, we must have $xbRyb$. A similar argument with
$$
x_1=x, y_1=b^{-2},\ x_2=y_2=\cdots x_{n-1}=y_{n-1}=e,\ x_n=x,\ y_n=y,
$$
shows that $b^{-1}xRb^{-1}y$. Now, let $xRy$ and assume that
$$
x_1=y_1=e,\ x_2=x,\ y_2=y,\ x_3=y_3=e, \ldots, x_{n-1}=y_{n-1}=e,\ x_n=y_n=b.
$$
Then $f(x_1^n)Rf(y_1^n)$, which means that $\theta(x)R\theta(y)$. This shows that $\theta(R)\subseteq R$. Note that the converse is also true, i.e. if $\theta(x)R\theta(y)$, then $xRy$. This is true, since $\theta(x)R\theta(y)$ implies $\theta^{n-1}(x)R\theta^{n-1}(y)$. So we have $bxb^{-1}Rbyb^{-1}$ and therefore $xRy$ by the above argument. Summarizing what we proved above, we have
$$
xRy\Leftrightarrow \theta(x)R\theta(y).
$$
Now, assume that $xRy$ and $uRv$. Let $u^{\prime}=\theta^{-1}(u)$, and $v^{\prime}=\theta^{-1}(v)$. So $u^{\prime}Rv^{\prime}$ and again a similar argument shows that $x\theta(u^{\prime})Ry\theta(v^{\prime})$, hence $xuRyv$. This shows that $R$ is closed under the binary operation of $G\times G$. Finally we show that $xRy$ implies $x^{-1}Ry^{-1}$. Note that $xRy$ implies $\overline{x}R\overline{y}$. But
$$
\overline{x}=b^{-1}\theta^{n-2}(x^{-1})\ldots\theta^2(x^{-1})\theta(x^{-1})
$$
$$
\overline{y}=b^{-1}\theta^{n-2}(y^{-1})\ldots\theta^2(y^{-1})\theta(y^{-1}).
$$
Therefore
$$
b^{-1}\theta^{n-2}(x^{-1})\ldots\theta^2(x^{-1})\theta(x^{-1})Rb^{-1}\theta^{n-2}(y^{-1})\ldots\theta^2(y^{-1})\theta(y^{-1}),
$$
and hence
$$
\theta(\theta^{n-3}(x^{-1})\ldots\theta(x^{-1})x^{-1})R\theta(\theta^{n-3}(y^{-1})\ldots\theta(y^{-1})y^{-1}).
$$
Therefore, we conclude that
$$
\theta^{n-3}(x^{-1})\ldots\theta(x^{-1})x^{-1}R\theta^{n-3}(y^{-1})\ldots\theta(y^{-1})y^{-1}.
$$
Continuing this argument, we obtain finally $x^{-1}Ry^{-1}$. So, we proved that $R$ is a $\theta$-invariant  subgroup of $G\times G$. Note that, clearly the converse is also true, so we proved the assertion.
\end{proof}

\begin{proposition}
Let $H_R=\{ x\in G: xRe\}=[e]_R$. Then $H_R$ is a $\theta$-invariant normal subgroup of $G$ and  it is a normal polyadic subgroup of $(G,f)$ only in the case $bRe$. \\
\end{proposition}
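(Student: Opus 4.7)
The plan is to exploit Theorem~3.1, which characterizes $R$ as a $\theta$-invariant subgroup of $(G \times G,\cdot)$. Viewing $H_R$ as the slice $\{x \in G : (x,e) \in R\}$, every required property of $H_R$ can be read off from the group-theoretic structure of $R$ together with its reflexivity (which guarantees $(g,g) \in R$ for every $g \in G$).

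For the first assertion I would verify the three properties in turn. Reflexivity yields $e \in H_R$; closure of $R$ under the multiplication on $G \times G$ gives $(xy,e) = (x,e)(y,e) \in R$; and closure under inversion gives $(x^{-1},e) \in R$. Hence $H_R$ is a subgroup of $(G,\cdot)$. The $\theta$-invariance of $R$ combined with $\theta(e) = e$ immediately yields $\theta(H_R) \subseteq H_R$. For normality in $(G,\cdot)$, reflexivity supplies $(g,g),\,(g^{-1},g^{-1}) \in R$, and then
$$
(g,g)\cdot (x,e)\cdot (g^{-1},g^{-1}) = (gxg^{-1},e) \in R
$$
for every $x \in H_R$ and $g \in G$, so that $gxg^{-1} \in H_R$.

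For the second assertion, the key identity is
$$
f(\stackrel{(n)}{e}) = e\,\theta(e)\cdots \theta^{n-1}(e)\, b = b,
$$
valid because $\theta$ fixes the identity of $(G,\cdot)$. If $H_R$ is a normal polyadic subgroup, then in particular it is closed under $f$, and since $e \in H_R$ this forces $b = f(\stackrel{(n)}{e}) \in H_R$, that is, $bRe$.

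I do not foresee any serious obstacle: the whole argument is a direct translation of Theorem~3.1 into statements about the distinguished class $H_R = [e]_R$. The one-sided formulation of the second claim is natural, since a converse would require the additional compatibility that $\theta$ act trivially on $R$-classes, a condition not implied by $bRe$ alone.
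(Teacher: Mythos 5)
Your proof is correct and follows essentially the same route as the paper: both arguments read the subgroup, normality, and $\theta$-invariance of $H_R=[e]_R$ directly off Theorem~3.1's characterization of $R$ as a $\theta$-invariant subgroup of $G\times G$ containing the diagonal, and both reduce the second assertion to the computation $f(\stackrel{(n)}{e})=b$. Your closing remark about why the converse fails (it would need $xR\theta(x)$ for all $x$) is a correct observation that the paper does not make explicit.
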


\begin{proof}
Let $x, y\in H_R$. Then $xRe$ and $yRe$ and so $xy^{-1}Re$. Also if $x\in H_R$ and $a\in G$ then $axa^{-1}Re$ and therefore $H_R$ is a normal subgroup of $(G, \cdot)$. Moreover if $xRe$ then $\theta(x)Re$ and hence $H_R$ is $\theta$-invariant. Note that if $x_1, \ldots, x_n\in H_R$, then $f(x_1^n)Rb$ and hence $H_R\leq (G, f)$ iff $bRe$.
\end{proof}

Now, we are ready to give the necessary and sufficient condition for a polyadic group to be $UAS$.

\begin{theorem}
$(G,f)$ is UAS iff the only normal $\theta$-invariant subgroups of $(G,\cdot)$ are trivial subgroups.
\end{theorem}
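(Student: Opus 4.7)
The plan is to package Theorem 3.1 and Proposition 3.2 into a bijective correspondence between $\text{Cong}(G,f)$ and the normal $\theta$-invariant subgroups of $(G,\cdot)$, after which the equivalence of $\text{UAS}$ with having only trivial such subgroups is immediate.

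For the ``only if'' direction, I would start from an arbitrary normal $\theta$-invariant subgroup $N\trianglelefteq(G,\cdot)$ and build the candidate congruence $R_N$ defined by $x\,R_N\,y\Leftrightarrow xy^{-1}\in N$. Verifying $R_N\in\text{Cong}(G,f)$ via Theorem 3.1 reduces to three checks: $R_N$ is an equivalence relation (standard, using that $N$ is a subgroup), $R_N$ is a subgroup of $G\times G$ (using normality of $N$ to rewrite $xuv^{-1}y^{-1}=(xy^{-1})\bigl(y(uv^{-1})y^{-1}\bigr)\in N\cdot N=N$), and $R_N$ is $\theta$-invariant (immediate from $\theta(xy^{-1})=\theta(x)\theta(y)^{-1}\in\theta(N)\subseteq N$). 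Once $R_N$ is a congruence, the $\text{UAS}$ hypothesis forces $R_N=\Delta$ or $R_N=G\times G$, which translates directly to $N=\{e\}$ or $N=G$.

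For the ``if'' direction, take any $R\in\text{Cong}(G,f)$ and form $H_R=[e]_R$. Proposition 3.2 already tells us $H_R$ is a normal $\theta$-invariant subgroup of $(G,\cdot)$. The key step, which I expect to be the only mildly non-obvious point in the argument, is to show that $R$ is completely recovered from $H_R$ via the rule $x\,R\,y\Leftrightarrow xy^{-1}\in H_R$. This uses Theorem 3.1 crucially: $R$ is simultaneously an equivalence relation (so $(y,y)\in R$) and a subgroup of $G\times G$, hence $(x,y)\in R$ implies $(x,y)(y,y)^{-1}=(xy^{-1},e)\in R$, i.e.\ $xy^{-1}\in H_R$; the converse is the same computation run backwards. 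Therefore $R$ is determined by $H_R$, and the standing hypothesis that the only normal $\theta$-invariant subgroups are $\{e\}$ and $G$ forces $R\in\{\Delta,G\times G\}$.

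The main conceptual obstacle is really just to recognize that Theorem 3.1, which says congruences are $\theta$-invariant subgroups of $G\times G$ that happen to be equivalence relations, automatically pins them down by their class of $e$; once that is noticed, the statement becomes a two-line corollary of the previous two results, with no new computation needed beyond the verifications of subgroup closure and $\theta$-invariance outlined above.
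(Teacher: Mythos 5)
Your proof is correct and takes essentially the same route as the paper: the paper's converse direction likewise defines $R=\{(x,y):x^{-1}y\in H\}$ from a normal $\theta$-invariant subgroup $H$, and its forward direction likewise passes through $H_R$ via Proposition 3.2. The only difference is that you make explicit the step the paper leaves implicit --- that $R$ is recovered from $H_R$ by $x\,R\,y\Leftrightarrow xy^{-1}\in H_R$, using $\Delta\subseteq R$ and the subgroup structure from Theorem 3.1 --- which is a worthwhile clarification rather than a departure.
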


\begin{proof}
Let $(G, \cdot)$ be $\theta$-simple (i.e. it has no non-trivial  normal $\theta$-invariant subgroup) and $R\in Con(G,f)$. Since $H_R$ is a $\theta$-invariant normal subgroup of $(G,\cdot)$, so $H_R=1$ or $G$. This shows that $R=\Delta$ or $G\times G$. So $(G,f)$ is UAS. Conversely,  suppose that $(G,f)$ is UAS, and let $H$ be a $\theta$-invariant normal subgroup of $(G, \cdot)$. Define
$$
R=\{ (x,y): x^{-1}y\in H\}.
$$
It is easy to see that $R\in Con(G,f)$ and so, $R=\Delta$ or $G\times G$ which implies that $H=1$ or $G$.
\end{proof}

In the previous section, we saw that $H\unlhd (G, f)$ implies that $(G/H, f_H)$ is reduced, but this is not true for $G/R$, when $R$ is an arbitrary congruence. To determine its structure, note that, since $H_R$ is $\theta$-invariant, so we can define a new automorphism
$$
\theta_R:\frac{G}{H_R}\to \frac{G}{H_R}
$$
by $\theta_R([x])=[\theta(x)]$. Let $b_R=[b]$. Then we have
\begin{eqnarray*}
f_R([x_1], \ldots,[x_n])&=&[f(x_1^n)]\\
                        &=&[x_1\theta(x_2)\ldots\theta^{n-1}(x_n)b]\\
                        &=&[x_1]\theta_R([x_2])\ldots\theta_R^{n-1}([x_n])b_R.
\end{eqnarray*}
This shows that
$$
\frac{G}{R}=der_{\theta_R, b_R}(\frac{G}{H_R}, \cdot).
$$

\begin{lemma}
Let $R\leq G\times G$. Then $R\in Eq(G)$ iff $\Delta\subseteq R$.
\end{lemma}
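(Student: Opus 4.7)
The plan is to prove the two implications separately. The forward direction is essentially automatic: if $R$ is an equivalence relation on $G$, then reflexivity gives $(x,x)\in R$ for every $x\in G$, which is exactly the statement $\Delta\subseteq R$.

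For the backward direction, assume $R\leq G\times G$ with $\Delta\subseteq R$. Reflexivity is immediate from $\Delta\subseteq R$. To obtain symmetry and transitivity, the idea is to exploit that the diagonal element $(a,a)$ lies in $R$ for \emph{every} $a\in G$, and to combine these with closure of $R$ under the product and inverse of $G\times G$ in order to move between pairs. Concretely, for symmetry, given $(x,y)\in R$, I would form the product
$$
(y,y)\cdot (x,y)^{-1}\cdot (x,x),
$$
which lies in $R$ by the subgroup axioms and simplifies to $(y,x)$. For transitivity, given $(x,y),(y,z)\in R$, I would insert the diagonal element $(y^{-1},y^{-1})\in R$ between them to form
$$
(x,y)\cdot (y^{-1},y^{-1})\cdot (y,z)=(x,z)\in R.
$$

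There is no real obstacle; the only point requiring thought is the observation that the diagonal supplies the "identity-like" elements $(a,a)$ needed to bridge between different components, after which symmetry and transitivity fall out of the group operations on $G\times G$. The lemma is really a general statement about subgroups of $G\times G$ and has nothing specific to do with the polyadic structure $f$ or the automorphism $\theta$; it is used in the sequel only as a bookkeeping device to identify which subgroups of $G\times G$ give equivalence relations, before cutting down further to those that are also $\theta$-invariant (hence congruences, by Theorem~3.1).
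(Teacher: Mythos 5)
Your proof is correct and follows essentially the same route as the paper: the forward direction is reflexivity, and the backward direction obtains symmetry and transitivity by multiplying the given pairs with diagonal elements and inverses inside the subgroup $R\leq G\times G$ (your product $(y,y)(x^{-1},y^{-1})(x,x)$ is just a reordering of the paper's $(x,x)(x^{-1},y^{-1})(y,y)$, and the transitivity computation is identical). No further comment is needed.
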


\begin{proof}
One side is trivial. So assume that $\Delta\subseteq R$. Let $(x,y)\in R$. We show that $(y,x)\in R$. Note that
$$
(y,x)=(x,x)(x^{-1},y^{-1})(y,y)\in R.
$$
So $R$ is symmetric. Now suppose $(x,y), (y,z)\in R$. Then
$$
(x,z)=(x,y)(y^{-1},y^{-1})(y,z)\in R.
$$
This completes the proof.
\end{proof}

\begin{corollary}
We have $Cong(G,f)=\{ R\leq_{\theta} G\times G: \Delta\subseteq R\}$.
\end{corollary}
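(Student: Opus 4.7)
The corollary is an immediate combination of the preceding theorem characterizing congruences as $\theta$-invariant equivalence-subgroups of $G\times G$ with the lemma that identifies which subgroups of $G\times G$ are equivalence relations.

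My plan is the following. First, I would unpack the notation $R\leq_\theta G\times G$: this means $R$ is a subgroup of the product group $G\times G$ that is closed under the diagonal automorphism $\theta$. With this in mind, the theorem proved earlier in this section states precisely that $R\in Cong(G,f)$ if and only if $R\in Eq(G)$ and $R\leq_\theta G\times G$. So the content remaining to verify is that, in the presence of the subgroup hypothesis, the condition $R\in Eq(G)$ can be replaced by the much simpler condition $\Delta\subseteq R$.

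For this second step I would appeal directly to the lemma just proved: if $R\leq G\times G$, then $R\in Eq(G)$ iff $\Delta\subseteq R$. The forward direction is trivial since reflexivity forces $(x,x)\in R$ for every $x\in G$, and the backward direction was established in the lemma. Substituting this equivalence into the statement of the theorem yields exactly
\[
R\in Cong(G,f)\ \Longleftrightarrow\ R\leq_\theta G\times G\ \text{and}\ \Delta\subseteq R,
\]
which is the claimed description of $Cong(G,f)$.

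There is no real obstacle here; the corollary is a pure bookkeeping step that repackages the characterization of congruences obtained in the theorem into a purely group-theoretic form, by using the lemma to eliminate the equivalence-relation condition in favor of the inclusion $\Delta\subseteq R$. The only thing one should be careful about is remembering that the subgroup structure on $R\subseteq G\times G$ is the one inherited from the binary group $(G,\cdot)$ underlying the Hossz\'u--Gloskin presentation of $(G,f)$, so that the $\theta$-invariance condition is well defined.
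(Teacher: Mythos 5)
Your proposal is correct and matches the paper's intent exactly: the corollary is stated without proof precisely because it follows by combining the theorem ($R\in Cong(G,f)$ iff $R\in Eq(G)$ and $R$ is a $\theta$-invariant subgroup of $G\times G$) with the lemma ($R\leq G\times G$ is in $Eq(G)$ iff $\Delta\subseteq R$), which is what you do. Nothing is missing.
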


\begin{proposition}
Let $R,Q\in Cong(G,f)$. Then the following assertions are true.

1. as subgroups of $G\times G$, we have $RQ=QR$.

2. we have $R\circ Q=RQ$.

3. we have $R\circ Q=Q\circ R$, so $Cong(G,f)$ is a modular lattice.

4. we have $H_{RQ}=H_RH_Q$ and $H_{R\cap Q}=H_R\cap H_Q$.
\end{proposition}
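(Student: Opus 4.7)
The plan is to first obtain an explicit description of any $R\in Cong(G,f)$ in terms of its associated normal subgroup $H_R=[e]_R$ provided by the previous proposition, and then derive all four statements uniformly from this description.

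First I would check that $R=\{(x,y)\in G\times G : yx^{-1}\in H_R\}$. Since $\Delta\subseteq R$ and $R\leq G\times G$, we have $(x,y)\in R$ iff $(x,y)(x,x)^{-1}=(e,yx^{-1})\in R$ iff $yx^{-1}\in H_R$. With this in hand, part (1) reduces to a short computation. Given $(a,b)\in R$ and $(c,d)\in Q$, write $b=ha$, $d=kc$ with $h\in H_R$, $k\in H_Q$; then
$$
(bd)(ac)^{-1}=ha\cdot kc\cdot c^{-1}a^{-1}=h\,(aka^{-1})\in H_RH_Q,
$$
where the last step uses the normality of $H_Q$ in $(G,\cdot)$ from the previous proposition. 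The reverse inclusion is a direct construction: given $yx^{-1}=hk$ with $h\in H_R$, $k\in H_Q$, set $a=x$, $b=hx$, $c=e$, $d=x^{-1}kx$ and check $(x,y)=(a,b)(c,d)$. Hence $RQ=\{(x,y):yx^{-1}\in H_RH_Q\}$, and symmetrically $QR=\{(x,y):yx^{-1}\in H_QH_R\}$. Normality of $H_R,H_Q$ in $(G,\cdot)$ gives $H_RH_Q=H_QH_R$, whence $RQ=QR$. The same description immediately yields the first half of (4), namely $H_{RQ}=H_RH_Q$; and the second half $H_{R\cap Q}=H_R\cap H_Q$ is tautological from the definition of $H_{(-)}$.

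For part (2) I would use a diagonal-insertion trick. If $(x,y)\in R\circ Q$, pick $z$ with $xRz$ and $zQy$; then $(x,y)=(x,z)\,(e,z^{-1}y)$, and $(e,z^{-1}y)=(z,z)^{-1}(z,y)\in Q$ because $(z,z)\in\Delta\subseteq Q$, so $(x,y)\in RQ$. Conversely, if $(x,y)=(a,b)(c,d)\in RQ$, set $z=bc$; then $(x,z)=(a,b)(c,c)\in R$ and $(z,y)=(b,b)(c,d)\in Q$ since $\Delta\subseteq R,Q$. Part (3) is then an immediate consequence of (1) and (2): $R\circ Q=RQ=QR=Q\circ R$. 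Modularity of $Cong(G,f)$ is the classical theorem (going back to Birkhoff) that any variety with permuting congruences has a modular congruence lattice; I would either cite this or verify it directly using $R\vee Q=R\circ Q$ to show $R\vee(Q\wedge T)=(R\vee Q)\wedge T$ whenever $R\leq T$.

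I expect the main obstacle to be the bookkeeping in establishing the explicit description $RQ=\{(x,y):yx^{-1}\in H_RH_Q\}$, since one has to use the normality of $H_Q$ in $(G,\cdot)$ at exactly the right spot; once this identification is in place, everything else — commutativity of the subgroup product, equality of product and relational composition, permutability, modularity, and the two identities in (4) — drops out essentially without further work.
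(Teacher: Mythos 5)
Your proof is correct. It rests on the same underlying mechanism as the paper's --- writing an element of a congruence as a diagonal element times an element of $1\times H_Q$ and invoking normality of $H_Q$ in $(G,\cdot)$ --- but you organize it differently. The paper proves $RQ=QR$ by the single manipulation $(x,y)(u,v)=(xu,yu)(e,u^{-1}v)$ followed by commuting $1\times H_Q$ (normal in $G\times G$) past $R$, and then dismisses parts 3 and 4 as ``now clear.'' You instead first extract the explicit description $R=\{(x,y): yx^{-1}\in H_R\}$ from $\Delta\subseteq R\leq G\times G$, which upgrades part 1 to the sharper identity $RQ=\{(x,y): yx^{-1}\in H_RH_Q\}$; this buys you part 4 essentially for free and makes transparent the bijection between congruences and $\theta$-invariant normal subgroups that the paper only uses implicitly. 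Two minor remarks: your convention for $R\circ Q$ (first $R$, then $Q$) is the reverse of the paper's displayed definition, which is harmless here since part 3 shows the two compositions coincide; and for $H_{RQ}$ to be well-defined one should note that $RQ$ is itself a congruence (it contains $\Delta$, is a subgroup since $R$ and $Q$ permute, and is $\theta$-invariant as a product of $\theta$-invariant subgroups), which follows from the characterization in Theorem 3.1 but deserves a sentence.
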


\begin{proof}
We know that $H_Q\unlhd G$, so $1\times H_Q\unlhd G\times G$. Let $(x,y)\in R$ and $(u,v)\in Q$. Since $R$ is a congruence, so $(xu,yu)\in R$ and also
\begin{eqnarray*}
(x,y)(u,v)&=&(x,y)(u,u)(u^{-1},u^{-1})(u,v)\\
          &=&(x u,y u)(e,u^{-1}v).
\end{eqnarray*}
Now, $(e,u^{-1}v)\in Q$, so $u^{-1}v\in H_Q$ and hence $(e,u^{-1}v)\in 1\times H_Q$. But, we have $R(1\times H_Q)=(1\times H_Q)R$, so
$$
(x,y)(u,v)=(e,w)(x^{\prime},y^{\prime}),
$$
for some $(e,w)\in Q$ and $(x^{\prime},y^{\prime})\in R$. This shows that $RQ=QR$.
To prove {\em 2}, note that
$$
R\circ Q=\{ (x,y)\in G\times G: \exists u, \ (x,u)\in Q\ and\ (u,y)\in R\}.
$$
So, if $(x,y)\in R\circ Q$, then $(x,y)=(x,u)(u^{-1}, u^{-1})(u,y)\in RQ$. Hence $R\circ Q\subseteq RQ$. The converse is also true.
The proofs are {\em 3} and {\em 4} are now clear.
\end{proof}

A congruence $R\in Cong(G, f)$ is called normal if there exists a normal polyadic group $H\unlhd (G, f)$ such that $R=\sim_H$. In what follows, we determine the normal elements of $Cong(G, f)$.

\begin{proposition}
Let $R$ be a normal congruence and $H$ be the normal polyadic subgroup corresponding to $R$. Then there exists an element $a\in G$ such that $H=aH_R$. \end{proposition}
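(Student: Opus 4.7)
The plan is to show two things: first, that every $R$-equivalence class has the form $xH_R$ (ordinary left coset in $(G, \cdot)$); and second, that $H$ itself is one of these $R$-equivalence classes. Combining the two immediately gives $H = aH_R$ for any $a \in H$.

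For the first point, I would exploit the fact (established in the previous theorem and corollary) that $R$ is a subgroup of $G \times G$ containing the diagonal $\Delta$. Given $(x, y) \in G \times G$, note that $(x^{-1}, x^{-1}) \in \Delta \subseteq R$, so multiplying we get
$$
(x, y) \in R \iff (x^{-1}, x^{-1})(x, y) = (e, x^{-1}y) \in R \iff x^{-1}y \in H_R \iff y \in xH_R.
$$
Thus $[x]_R = xH_R$ for every $x \in G$.

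For the second point, I would fix any $u \in H$ and show $[u]_R = H$. The inclusion $[u]_R \subseteq H$ is immediate from the closure of the polyadic subgroup $H$ under $f$: if $v = f(u, h_1^{n-1})$ with $u, h_i \in H$, then $v \in H$. For the reverse inclusion, given $v \in H$, I pick any $h_1, \ldots, h_{n-2} \in H$ and invoke the defining $n$-ary group axiom inside the polyadic subgroup $(H, f|_H)$ to solve $f(u, h_1, \ldots, h_{n-2}, h_{n-1}) = v$ uniquely for $h_{n-1} \in H$. This shows $v \sim_H u$, so $v \in [u]_R$.

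Putting the two points together, $H = [u]_R = uH_R$, so the desired $a$ can be taken to be any element of $H$. There is no serious obstacle here; the only subtlety is recognizing that $H$ itself is an $R$-class (which uses that $H$ is a polyadic subgroup, not just a subset closed under $f$), and that once this is known, the coset description is forced by the group-theoretic structure of $R \leq G \times G$.
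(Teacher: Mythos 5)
Your proof is correct and follows essentially the same route as the paper: both arguments use the solvability of $y=f(x,h_1^{n-1})$ inside the polyadic subgroup $H$ to show that $H$ lies in a single $R$-class, and the closure of $H$ under $f$ to show that this class is contained in $H$. The only difference is that you explicitly justify the identification $[a]_R=aH_R$ via the subgroup structure of $R\leq G\times G$, a step the paper leaves implicit.
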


\begin{proof}
Suppose $x, y\in H$ and $h_1, \ldots, h_{n-2}\in H$ are arbitrary elements. We know that there is $h_{n-1}\in H$ such that $y=f(x, h_1^{n-1})$. Hence $x\sim_H y$ and therefore there exists $a\in G$ such that $x, y\in [a]_R$. Hence $H\subseteq aH_R$. Now, suppose $u\in aH_R$. Then there is $x\in H$ such that $u\sim_H x$, so there are $h_1, \ldots, h_{n-1}\in H$ such that $u=f(x, h_1^{n-1})\in H$. Hence $aH_R\subseteq H$.
\end{proof}

\begin{theorem}
Let $R$ be a congruence. Then $R$ is normal iff there exists $a\in G$ such that

1. $aR\overline{a}$,

2. for all $x\in G$, we have $f(\overline{x},\stackrel{(n-3)}{x}, a, x)Ra$.
\end{theorem}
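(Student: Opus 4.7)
The plan is to combine two ingredients from earlier in the paper. First, the preceding proposition identifies the normal polyadic subgroup associated with any normal congruence as a coset $aH_R$, so in the forward direction we already know which element $a$ to use. Second, Theorem 3.1 together with Corollary 3.5 shows that a congruence $R'$ is uniquely determined by $H_{R'}=[e]_{R'}$ via $xR'y \Leftrightarrow x^{-1}y \in H_{R'}$; this rigidity will let us equate two congruences by checking just one of their classes.

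For the forward direction, suppose $R=\sim_H$ with $H\unlhd (G,f)$. By the previous proposition $H=[a]_R$ for some $a\in G$. Condition 1 is then simply $\overline{a}\in H$, which holds because $H$ is a polyadic subgroup. Condition 2 is $f(\overline{x},\stackrel{(n-3)}{x},a,x)\in H$ for every $x$, which is exactly normality of $H$ applied to the element $a\in H$.

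For the converse, I would set $H=[a]_R$ and show that $H$ is a normal polyadic subgroup of $(G,f)$ with $\sim_H = R$. Polyadic closure follows from $R$ being a congruence: if $xRa$, then $\overline{x}R\overline{a}Ra$ by condition 1, so $\overline{x}\in H$; and if $x_1,\dots,x_n\in H$, then $f(x_1^n)\,R\,f(\stackrel{(n)}{a})\,R\,f(\overline{a},\stackrel{(n-1)}{a})=a$, invoking condition 1 once and the D\"ornte identity. Normality of $H$ is immediate from condition 2: for $h\in H$ and $x\in G$, the congruence property replaces $h$ by $a$, and condition 2 places the resulting element in $[a]_R$.

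The main obstacle — but also the cleanest step once this setup is in place — is proving $R=\sim_H$. I would use the rigidity principle from the second ingredient: both $R$ and $\sim_H$ are congruences on $(G,f)$, hence determined by their respective $e$-classes. One computes $[a]_{\sim_H}=H$, using that $H$ is itself a polyadic subgroup so every $h\in H$ has the form $f(a,h_1^{n-1})$ with $h_i\in H$ (solvability of polyadic equations inside $H$). Combined with $[a]_R=H$, this yields $H_{\sim_H}=a^{-1}H=H_R$, and the rigidity statement forces $\sim_H=R$.
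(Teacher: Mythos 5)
Your proof is correct, and while it follows the same overall skeleton as the paper's (both directions hinge on identifying the relevant subgroup as the coset $H=aH_R=[a]_R$ from the preceding proposition), it diverges at the two places where the paper computes. In the direction ``normal $\Rightarrow$ conditions,'' the paper derives $aR\overline{a}$ by computing $f(\stackrel{(n)}{a})=a(\overline{a})^{-1}a$ via Lemma 2.1 and deducing $a(\overline{a})^{-1}\in H_R$, and merely asserts that condition 2 is ``similar''; you instead observe directly that $\overline{a}\in H=[a]_R$ because a polyadic subgroup is closed under the skew operation, and that condition 2 is literally the normality condition applied to $a\in H$ --- shorter and arguably more transparent. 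In the direction ``conditions $\Rightarrow$ normal,'' the crucial step $R=\sim_H$ is where the approaches genuinely differ: the paper proves $x\sim_H y\Rightarrow xRy$ by an explicit computation (writing each $h_i=ah_i'$ with $h_i'\in H_R$, using $\theta$-invariance and normality of $H_R$ and the identity $f(x,\stackrel{(n-1)}{a})=x(\overline{a})^{-1}a$) and then asserts the converse implication without proof; you instead invoke the rigidity coming from Theorem 3.1 and Corollary 3.5 --- a congruence is a subgroup of $G\times G$ containing $\Delta$, hence is determined by its class $[e]$ via $xR'y\Leftrightarrow x^{-1}y\in H_{R'}$ --- and reduce everything to the single computation $[a]_{\sim_H}=H=[a]_R$. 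This buys you a complete two-sided argument (the paper's ``converse is also true'' is actually the half that needs the most care) at the price of relying on the structure theory of congruences already established in Section 3, which is a perfectly legitimate dependence here since $\sim_H$ is known to be a congruence (Theorem 2.5) once $H$ is shown to be a normal polyadic subgroup. The one point worth making explicit in a write-up is that $a\in aH_R$ (since $e\in H_R$), which you use tacitly when applying normality to the element $a$.
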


\begin{proof}
Suppose $R$ satisfies {\em 1} and {\em 2}. We first show that $H=aH_R$ is a normal polyadic subgroup. Suppose $x_i\in H$ for $1\leq i\leq n$. Then for all $i$, we have $x_iRa$ and since $aR\overline{a}$, hence $f(x_1^n)Rf(\overline{a},\stackrel{(n-1)}{a})$. This shows that $f(x_1^n)Ra$ and therefore $f(x_1^n)\in H$. On the other hand, if $x\in H$, then $xRa$ and so $\overline{x}R\overline{a}$. Now, since $aR\overline{a}$, so $\overline{x}Ra$, which proves that $\overline{x}\in H$. Therefore $H$ is a polyadic subgroup. Now, let $x\in G$ and $h\in H$. Then $hRa$ and hence, $f(\overline{x},\stackrel{(n-3)}{x}, h, x)Rf(\overline{x},\stackrel{(n-3)}{x}, a, x)$. But by {\em 2}, we have $f(\overline{x},\stackrel{(n-3)}{x}, a, x)Ra$, so
$f(\overline{x}, \stackrel{(n-3)}{x},  h, x)\in H$, proving that $H$ is a normal.

Now, we prove that $R=\sim_H$. Suppose $x\sim_H y$. Then there exist $h_1, \ldots, h_{n-1}\in H$, such that $y=f(x, h_1^{n-1})$. But, then every $h_i$ is of the form $ah_i^{\prime}$, with $h_i^{\prime}\in H_R$. Remember from 3.2 that $H_R$ is a $\theta$-invariant normal subgroup of $(G, \cdot)$. So we have $y=f(x, ah_1^{\prime}, \ldots, ah_{n-1}^{\prime})=f(x, \stackrel{(n-1)}{a})h^{\prime}$ for some $h^{\prime}\in H_R$. Note that, $f(x, \stackrel{(n-1)}{a})=x(\overline{a})^{-1}a$ by 2.1. On the other hand, $(\overline{a})^{-1}a\in H_R$ by {\em 1}. Hence
$$
y=f(x, \stackrel{(n-1)}{a})h^{\prime}\in xH_R,
$$
which shows that $xRy$. Hence, we proved that $x\sim_H y$ implies $xRy$. The converse is also true, showing that $R=\sim_H$. Therefore $R$ is a normal congruence.

Now suppose $R$ is normal. So $R=\sim_H$ for some normal polyadic subgroup $H$. By 3.7, there is  $a\in G$ such that $H=aH_R$. We prove that $a$ satisfies {\em 1} and {\em 2} above. Let $x_1, \ldots, x_n\in H$. Then for all $i$, we have $x_iRa$, and hence $f(x_1^n)Rf(\stackrel{(n)}{a})$. Since $H$ is a polyadic subgroup, so $f(x_1^n)Ra$ and hence $aRf(\stackrel{(n)}{a})$. Now, using 2.1, it is easy to see that $f(\stackrel{(n)}{a})=a(\overline{a})^{-1}a$. Therefore, $f(\stackrel{(n)}{a})a^{-1}\in H_R$ implies that $a(\overline{a})^{-1}aa^{-1}\in H_R$. This shows that $a(\overline{a})^{-1}\in H_R$. So $aR\overline{a}$, proving {\em 1}. The proof of {\em 2} is similar.
\end{proof}

Form the above theorem, one can deduce that if $R, Q\in Cong(G, f)$ with $R$ normal and $R\subseteq Q$, then $Q$ is also normal. We can restate the above theorem as in the following form.

\begin{corollary}
A congruence $R$ is normal iff there exists an element $a\in G$ such that

1. $aR\overline{a}$,

2. for all $x\in G$, $\theta^{-1}(x^{-1}a)xRa$.
\end{corollary}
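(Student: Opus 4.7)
The plan is to observe that this corollary is not a new result but a restatement of the preceding theorem, where condition 2 has been rewritten in a more computationally convenient form. Condition 1, namely $aR\overline{a}$, is verbatim the same in both formulations, so nothing needs to be done there. Everything reduces to showing that the two versions of condition 2 are literally the same condition, merely written differently.

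First I would recall the computation carried out in the proof of Lemma 2.3, which showed the identity
\[
f(\overline{x},\stackrel{(n-3)}{x}, h, x) \;=\; \theta^{-1}(x^{-1}h)\,x
\]
for every $x \in G$ and every $h \in G$ (the derivation never used that $h$ lies in a subgroup, only the Hossz\'u--Gloskin form of $f$ and the expression for $\overline{x}$ from Lemma 2.1). Substituting $h = a$ in this identity yields
\[
f(\overline{x},\stackrel{(n-3)}{x}, a, x) \;=\; \theta^{-1}(x^{-1}a)\,x.
\]
Therefore, for a fixed element $a \in G$ and for each $x \in G$, the relation $f(\overline{x},\stackrel{(n-3)}{x}, a, x)\,R\,a$ holds if and only if $\theta^{-1}(x^{-1}a)x\,R\,a$ holds. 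Hence condition 2 of Theorem 3.3 and condition 2 of the corollary are equivalent statements about the same element $a$.

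Combining the equivalence of the two conditions with Theorem 3.3, the corollary follows immediately: the congruence $R$ is normal if and only if there exists $a \in G$ satisfying the pair of conditions in Theorem 3.3, which is if and only if there exists $a \in G$ satisfying the pair of conditions of the corollary. There is no genuine obstacle here; the content of the corollary is exactly to package the characterization in a form that avoids the $n$-ary operation $f$ and expresses everything in terms of the underlying binary group operation and the automorphism $\theta$.
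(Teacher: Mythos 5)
Your proposal is correct and matches the paper's (implicit) reasoning exactly: the corollary is obtained from Theorem 3.10 by invoking the identity $f(\overline{x},\stackrel{(n-3)}{x}, h, x)=\theta^{-1}(x^{-1}h)x$ established in the proof of Lemma 2.3, which indeed holds for arbitrary $h\in G$ since that computation uses only the Hossz\'u--Gluskin form of $f$ and the formula for $\overline{x}$. Nothing further is needed.
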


\section{GTS and normal polyadic subgroups}
This section is devoted to $GTS$ polyadic groups. Again, we assume that $(G,f)=\Gf$ is an $n$-ary group. For $u\in G$, define a new binary operation on $G$ by $x\ast y=xu^{-1}y$. Then $(G, \ast)$ is an isomorphic copy of $(G,\cdot)$ and the isomorphism is the map $x\mapsto xu$. We denote this new group by $G_u$. Its identity is $u$ and the inverse of $x$ is $ux^{-1}u$, which we denote it by $x^{-u}$. We define an automorphism of $G_u$ by $\psi_u(x)=u\theta(x)\theta(u^{-1})$. It can be easily checked that this is actually an automorphism of $G_u$.

\begin{theorem}
We have $H\unlhd (G, f)$ iff there exists an element $u\in H$ such that

1. $H$ is a $\psi_u$-invariant normal subgroup of $G_u$,

2. for all $x\in G$, we have $\theta^{-1}(x^{-1}u)x\in H$.
\end{theorem}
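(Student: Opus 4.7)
My plan is to reformulate both conditions in terms of the coset $K := Hu^{-1}$ inside $(G,\cdot)$. Under the isomorphism $x \mapsto xu$ from $(G,\cdot)$ to $G_u$, the subgroup $H$ corresponds to $K$ and the automorphism $\psi_u$ of $G_u$ pulls back to $I_u \theta$ on $(G,\cdot)$, so Condition 1 translates to the statement that $K$ is a normal $(I_u\theta)$-invariant subgroup of $(G,\cdot)$, while Condition 2 becomes $\theta^{-1}(x^{-1}u)x u^{-1} \in K$ for every $x \in G$. With this dictionary, most of the proof becomes bookkeeping inside the alternative Hossz\'u--Gloskin decomposition $(G,f) = der_{\psi_u, b_u}(G_u)$, where $b_u = u\bar u^{-1}u = f(\stackrel{(n)}{u})$.

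For the forward direction I take any $u \in H$. Condition 2 is immediate from Lemma 2.3 with $h = u$. For Condition 1, the subgroup property of $H$ in $G_u$ follows from a retract argument: since $(H, f|_H)$ is itself an $n$-ary group, there exists a unique $v \in H$ with $\bar v = u$, and the retract operation at $v$, namely $f(x, \stackrel{(n-2)}{v}, y) = x \bar v^{-1} y = x u^{-1} y$, is exactly the operation $\ast$ of $G_u$ restricted to $H$; hence $(H, \ast)$ is a group. The $\psi_u$-invariance follows from a direct computation using $\theta(b) = b$, $\theta^{n-1}(y) = byb^{-1}$, and Lemma 2.1: one verifies the identity
\[
\psi_u(h) = f(u, h, \stackrel{(n-3)}{u}, \bar u),
\]
whose right-hand side lies in $H$ because $u, h, \bar u \in H$ and $H$ is a polyadic subgroup. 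Normality of $H$ in $G_u$ is then obtained by applying Lemma 2.3 to the $(G_u, \psi_u, b_u)$-decomposition of $(G, f)$, which gives $\psi_u^{-1}(x^{-u} \ast h) \ast x \in H$ for all $x \in G$ and $h \in H$, and then using the $\psi_u$-invariance just established to rewrite this as $x \ast h \ast x^{-u} \in H$.

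For the backward direction I substitute $h_i = k_i u$ with $k_i \in K$ into $f(h_1^n) = h_1 \theta(h_2) \cdots \theta^{n-1}(h_n) b$; iteratively pulling the $k_i$'s to the left via $(I_u\theta)$-invariance and normality of $K$ collapses the expansion into the form $k' \cdot b_u$ for some $k' \in K$, so closure under $f$ is equivalent to $b_u \in H$, i.e.\ $u \bar u^{-1} \in K$. I plan to extract this from Condition 2 by first setting $x = e$ to get $\theta^{-1}(u) u^{-1} \in K$, then using $\theta(K) = K$ (which follows from Condition 1 together with $\theta^{n-1} = I_b$) to bootstrap to $\theta^k(u) u^{-1} \in K$ for every $k \geq 0$, and finally rearranging the product $b^{-1} \theta^{n-2}(u^{-1}) \cdots \theta(u^{-1}) u^{-1}$ defining $\bar u u^{-1}$ into an element of $K$ by repeatedly conjugating factors into $K$. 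Closure under the skew operation is then an analogous computation using Lemma 2.1, and the normality condition $\theta^{-1}(x^{-1}h)x \in H$ of Lemma 2.3 is verified by splitting $h = k u$ and combining $\theta^{-1}(x^{-1} k x) \in K$ (from normality and $\theta$-invariance of $K$) with Condition 2 applied at $x$. The main obstacle will be the careful derivation of $\bar u \in H$ in the backward direction; once that is in hand, the rest is routine bookkeeping in $G_u$.
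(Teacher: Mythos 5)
Your overall strategy --- translating conditions 1 and 2 to the subgroup $K=Hu^{-1}$ of $(G,\cdot)$ and combining $\theta^{-1}(x^{-1}h)x\in H$, $\theta^{-1}(x^{-1}u)x\in H$ and $\psi_u$-invariance to produce $x^{-u}\ast h\ast x$ --- is essentially the paper's; its central computation is exactly your identity $\psi_u(\theta^{-1}(x^{-1}h)x)\ast\psi_u(\theta^{-1}(x^{-1}u)x)^{-u}=x^{-u}\ast h\ast x$, and your passage to $K$ is its Lemma 4.2. The first genuine problem is in your forward direction: you fix an \emph{arbitrary} $u\in H$ and assert the existence of $v\in H$ with $\overline{v}=u$. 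The skew map is not surjective in general --- the paper's own closing remark of Section 4 notes that there are polyadic groups in which all skew elements coincide --- so this $v$ need not exist. Since the theorem only claims existence of some $u$, the fix is to reverse the quantifier: pick $v\in H$ and set $u:=\overline{v}$, so that $ret_v(H,f|_H)$ is a subgroup of $ret_v(G,f)=G_u$; the rest of your forward argument (the identity $\psi_u(h)=f(u,h,\stackrel{(n-3)}{u},\overline{u})$ and the normality computation) then goes through. The paper avoids this issue by obtaining $u$, the operation $xu^{-1}y$ and the automorphism $\psi_u$ simultaneously from Theorem 1.2 applied to the inclusion $H\hookrightarrow G$.

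The second problem is the step you yourself flag as the main obstacle, and it cannot be overcome: conditions 1 and 2 do \emph{not} imply $\overline{u}\in H$, equivalently $b_u=f(\stackrel{(n)}{u})\in H$. Your bootstrap only gives $\theta^k(u)\equiv u \pmod{K}$, hence $\overline{u}\,u^{-1}\equiv b^{-1}u^{-(n-1)} \pmod{K}$, and nothing in the hypotheses controls $u^{n-1}b$ modulo $K$. Concretely, in $(G,f)=der(\mathbb{Z},+)$ with $n=3$, take $H=\{1\}$ and $u=1$: then $\psi_1$ is the identity, $\{1\}$ is a $\psi_1$-invariant normal subgroup of $G_1$, and $\theta^{-1}(x^{-1}u)x=1\in H$ for every $x$, yet $f(1,1,1)=3\notin H$ and $\overline{1}=-1\notin H$, so $H$ is not even a polyadic subgroup. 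This shows the backward implication needs the extra hypothesis $f(\stackrel{(n)}{u})\in H$ (which the paper's forward argument does produce, via $f(\stackrel{(n)}{u})=\phi(c)u=c\in H$, but which its converse tacitly assumes when it treats ``$\psi_u$-invariant subgroup of $G_u$'' as equivalent to ``polyadic subgroup''). So your proposal correctly locates the weak point of the statement but cannot close it from the stated hypotheses; once $b_u\in H$ is added, closure under $f$ and under the skew operation follow immediately from the decomposition $(G,f)=der_{\psi_u,b_u}(G_u)$, and the remainder of your backward direction is sound.
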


\begin{proof}
We first determine the structure of polyadic subgroups, using 1.2. Suppose $H\leq (G, f)$. We denote the restriction of $f$ to $H$ by $f$, so there is a binary operation on $H$, say $\ast$, an automorphism $\psi$ and an element $c\in H$, such that
$$
(H, f)=der_{\psi, c}(H, \ast).
$$
The inclusion map $j:H\to G$ is a polyadic homomorphism, hence by 1.2, there is an element $u\in G$ and an ordinary homomorphism $\phi:(H, f)\to (G,f)$, with the properties

i-\ \  $j=R_u\phi$,

ii-\  $f(\stackrel{(n)}{u})=\phi(c)u$,

iii- $\phi\psi=I_u\theta\phi$.

From i, we deduce that for any $x\in H$, $\phi(x)=xu^{-1}$, and so by ii, we have $f(\stackrel{(n)}{u})=c$. Moreover, since $\phi$ is an ordinary homomorphism, so using $\phi(x\ast y)=\phi(x)\phi(y)$ and $\phi(x)=xu^{-1}$, we obtain $x\ast y=xu^{-1}y$. Finally, by iii, we have $\psi(x)=u\theta(x)\theta(u^{-1})$, and therefore we must have $(H, \ast)\leq G_u$. Further, $H$ is invariant under $\psi_u$ and hence $\psi=\psi_u|_H$. So, we proved that  $H$ is a polyadic subgroup of $(G, f)$ iff there exists an element $u$ such that $H$ is a $\psi_u$-invariant subgroup of $G_u$. Now, suppose such an $H$ is  normal in $(G, f)$. We show that $H\unlhd G_u$, equivalently
$$
x^{-u}\ast h\ast x\in H
$$
for all $x\in G_u$ and $h\in H$. Note that this last statement is equivalent to $ux^{-1}hu^{-1}x\in H$. Since $H$ is a normal polyadic subgroup, so by 2.3,
$$
\theta^{-1}(x^{-1}h)x, \theta^{-1}(x^{-1}u)x\in H.
$$
But $H$ is $\psi_u$-invariant, so
$$
\psi_u(\theta^{-1}(x^{-1}h)x), \psi_u(\theta^{-1}(x^{-1}u)x)\in H.
$$
Therefore the following element also belongs to $H$,
\begin{eqnarray*}
\psi_u(\theta^{-1}(&x^{-1}&h)x)\ast\psi_u(\theta^{-1}(x^{-1}u)x)^{-u}\\
                             &=&ux^{-1}h\theta(x)\theta(u^{-1})u^{-1}u(ux^{-1}u\theta(x)\theta(u^{-1}))^{-1}u\\
                             &=&ux^{-1}hu^{-1}x.
\end{eqnarray*}
This shows that $H\unlhd G_u$. Note that the following is also automatically holds:
$$
\forall x\in G: \theta^{-1}(x^{-1}u)x\in H.
$$
Conversely, suppose there is a $u\in G$ such that $H$ is a $\psi_u$-invariant normal subgroup of $G_u$ and
$$
\forall x\in G: \theta^{-1}(x^{-1}u)x\in H.
$$
We show that $H$ is a normal polyadic subgroup. The equality
$$
\psi_u(\theta^{-1}(x^{-1}h)x)\ast \psi_u(\theta^{-1}(x^{-1}u)x)^{-u}=x^{-u}\ast h\ast x
$$
shows that $\psi_u(\theta^{-1}(x^{-1}h)x)\in H$, and since $H$ is invariant under $\psi_u$, so $\theta^{-1}(x^{-1}h)x\in H$. Therefore, $H$ is a normal polyadic subgroup.
\end{proof}

\begin{lemma}
Suppose $u\in G$ is an arbitrary element. Then $H$ is a
$\theta$-invariant normal subgroup of $(G,\cdot)$ iff $Hu$ is
$\psi_u$-invariant normal subgroup of $G_u$.
\end{lemma}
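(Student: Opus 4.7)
My plan is to reduce the whole equivalence to a transport-of-structure argument along the isomorphism $\sigma:(G,\cdot)\to G_u$, $\sigma(x)=xu$, which is already recorded in the first paragraph of this section. Under $\sigma$, a subgroup $H$ of $(G,\cdot)$ corresponds to the set $Hu$ in $G_u$, and because $\sigma$ is a group isomorphism it preserves subgroup-hood and normality. So I will get for free that $H\unlhd(G,\cdot)$ is equivalent to $Hu\unlhd G_u$, and I do not need to check the normality half of the lemma by a direct coset calculation.

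The remaining task is to match the two invariance conditions. A short expansion using the formula $\psi_u(x)=u\theta(x)\theta(u^{-1})$ gives
\[
\psi_u(hu)=u\,\theta(hu)\,\theta(u^{-1})=u\,\theta(h)\,\theta(u)\,\theta(u^{-1})=u\,\theta(h),
\]
so $\psi_u(hu)\in Hu$ holds if and only if $u\,\theta(h)\,u^{-1}\in H$. Hence $\psi_u$-invariance of $Hu$ in $G_u$ translates to the inclusion $u\,\theta(H)\,u^{-1}\subseteq H$ in $(G,\cdot)$.

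To finish, I combine this inclusion with the normality information. If $H$ is normal in $(G,\cdot)$, conjugation by $u$ stabilises $H$, so $u\,\theta(H)u^{-1}\subseteq H$ is equivalent to $\theta(H)\subseteq u^{-1}Hu=H$, i.e.\ to $\theta$-invariance; combined with normality, I get the forward direction. Conversely, if $Hu$ is a $\psi_u$-invariant normal subgroup of $G_u$, transport along $\sigma$ gives $H\unlhd(G,\cdot)$, and the same cancellation $u^{-1}(u\theta(H)u^{-1})u=\theta(H)\subseteq H$ yields $\theta$-invariance. The only point requiring care is this cancellation step: $\psi_u$-invariance alone delivers only $u\theta(H)u^{-1}\subseteq H$, not $\theta(H)\subseteq H$, and this is precisely why normality and $\theta$-invariance must be bundled together on each side of the biconditional.
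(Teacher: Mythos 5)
Your proof is correct. It differs from the paper's mainly in how the normality half of the equivalence is handled: the paper verifies directly that $x^{-u}\ast hu\ast x=ux^{-1}hx\in Hu$ (and, in the converse direction, that $xhu^{-1}x^{-1}\in Hu^{-1}$), each step quietly using $uH=Hu$; you instead observe that $\sigma(x)=xu$ is the already-recorded isomorphism $(G,\cdot)\to G_u$ carrying $H$ to $Hu$, so normality transports for free in both directions. For the invariance half the two arguments coincide on the key computation $\psi_u(hu)=u\theta(h)$ (the paper has a typo, writing $u\theta(u)$ there), but you package it more transparently as the equivalence of $\psi_u$-invariance of $Hu$ with $u\,\theta(H)\,u^{-1}\subseteq H$, and you correctly flag that normality is needed to cancel the conjugation by $u$ — the same point the paper uses implicitly when it places $u\theta(h)$ in $Hu$ rather than $uH$. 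Your version is a little cleaner and makes the logical dependence between the two conditions explicit; the paper's version is more self-contained in that it never appeals to the isomorphism $x\mapsto xu$ inside the proof.
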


\begin{proof}
Suppose $H\unlhd_{\theta} G$. Then it can be checked that $Hu$ is a
subgroup of $G_u$. For any $x\in G$ and $h\in H$ we have
\begin{eqnarray*}
x^{-u}\ast hu\ast x&=&(ux^{-1}u)u^{-1}(hu)u^{-1}x\\
                   &=&ux^{-1}hx,
\end{eqnarray*}
which is clearly an element of  $Hu$. So $Hu\unlhd G_u$. Also
$$
\psi_u(hu)=u\theta(hu)\theta(u^{-1})=u\theta(u),
$$
which shows that $Hu$ is $\psi_u$-invariant. Conversely, suppose $H$
is a $\psi_u$-invariant normal subgroup of $G_u$. We show that
$Hu^{-1}$ is a $\theta$-invariant normal subgroup of $G$. We have
$$
xhu^{-1}x^{-1}=(xu)\ast h\ast (xu)^{-u}u^{-1},
$$
which belongs to $Hu^{-1}$. So $Hu^{-1}\unlhd G$. Similarly,
$\theta(hu^{-1})=u^{-1}\psi_u(h)$ belongs to $u^{-1}H=e\ast H=H\ast
e=Hu^{-1}$. Hence $Hu^{-1}$ is $\theta$-invariant.
\end{proof}

Suppose $K$ is a $\theta$-invariant normal subgroup of $(G, \cdot)$.
Then $\theta$ induces an automorphism of $G/K$ which we denote it by
$\theta_K$ in what follows.

\begin{corollary}
Let $H\unlhd(G, f)$. Then there exists an element $u$ such that
$K=H\cdot u^{-1}$ is a $\theta$-invariant normal subgroup of $G$ and
$\theta_K$ is an inner automorphism. The converse is also true.
\end{corollary}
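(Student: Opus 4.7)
The plan is to deduce the corollary directly from Theorem 4.1 together with Lemma 4.2, using the former to convert the normal polyadic subgroup into a pair (ordinary normal subgroup of $G_u$, condition on $\theta$) and the latter to transport that normal subgroup back to $(G,\cdot)$.

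For the forward direction, I would begin with $H\unlhd(G,f)$ and apply Theorem 4.1 to produce an element $u\in H$ such that $H$ is a $\psi_u$-invariant normal subgroup of $G_u$ and $\theta^{-1}(x^{-1}u)x\in H$ for every $x\in G$. Setting $K=Hu^{-1}$, Lemma 4.2 immediately gives that $K$ is a $\theta$-invariant normal subgroup of $(G,\cdot)$. It then remains to extract the innerness of $\theta_K$ from condition (2). Multiplying by $u^{-1}$ on the right, condition (2) reads $\theta^{-1}(x^{-1}u)xu^{-1}\in K$, and applying $\theta$ (which preserves $K$) yields
\[
x^{-1}u\,\theta(x)\,\theta(u^{-1})\in K \qquad (\forall x\in G).
\]
Passing to $G/K$, this becomes $\bar u\,\theta_K(\bar x)=\bar x\,\theta_K(\bar u)$, so $\theta_K(\bar x)=\bar u^{-1}\bar x\,\theta_K(\bar u)$. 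Setting $\bar x=\bar e$ forces $\theta_K(\bar u)=\bar u$, whence $\theta_K(\bar x)=\bar u^{-1}\bar x\,\bar u$, i.e.\ $\theta_K$ is the inner automorphism induced by $\bar u^{-1}$.

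For the converse, suppose $K\unlhd_\theta(G,\cdot)$ and $\theta_K$ is inner, say $\theta_K(\bar x)=\bar a\bar x\bar a^{-1}$. Choose any $u\in G$ with $\bar u=\bar a^{-1}$, so that conjugation by $\bar u^{-1}$ on $G/K$ agrees with $\theta_K$; in particular $\theta_K(\bar u)=\bar u$. Put $H=Ku$. By Lemma 4.2, $H$ is a $\psi_u$-invariant normal subgroup of $G_u$, and $u\in H$ is automatic. To apply Theorem 4.1 it is enough to verify condition (2), which by the calculation in the forward direction is equivalent to $\bar u\,\theta_K(\bar x)=\bar x\,\theta_K(\bar u)$ in $G/K$; using $\theta_K(\bar u)=\bar u$ this reduces to $\theta_K(\bar x)=\bar u^{-1}\bar x\bar u$, which is exactly our hypothesis. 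Theorem 4.1 then delivers $H\unlhd(G,f)$.

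The main obstacle is really just a bookkeeping one: matching up the element $u$ from Theorem 4.1 with the element witnessing the innerness of $\theta_K$. The delicate point is noticing that condition (2) of Theorem 4.1 only depends on the coset $Ku$ and not on the particular representative $u$, so that changing $u$ within its $K$-coset does not disturb anything; this is what makes the two directions genuinely inverse to each other. Once this is observed, the quotient computation is short and the rest is bookkeeping.
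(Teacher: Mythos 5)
Your proposal is correct and follows essentially the same route as the paper: both directions rest on Theorem 4.1 combined with Lemma 4.2, with the only real work being the translation of condition (2) of Theorem 4.1 into the statement that $\theta_K$ is inner (you do this by a direct computation in $G/K$, the paper by manipulating cosets in $G_u$, but these are the same argument in different clothing). Your write-up in fact supplies slightly more detail than the paper, which dismisses the converse with ``by a similar argument.''
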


\begin{proof}
First, we notice that $H\cdot u^{-1}$ is not a polyadic coset, but it is the set $\{ hu^{-1}: h\in H\}$.
Suppose $H\unlhd (G, f)$. By 4.1, there is an element $u\in H$ such
that $H$ is a $\psi_u$-invariant normal subgroup of $G_u$ and for
any $x\in G$, we have $\theta^{-1}(x^{-1}u)x\in H$. Let $K=H\cdot u^{-1}$.
By the above lemma $K$ is a $\theta$-invariant normal subgroup of
$G$. Now, $\theta^{-1}(x^{-1}u)x\in H$ and $H$ is
$\psi_u$-invariant, so $x^{-u}\ast \psi_u(x)\in H$. Therefore
$\psi_u(x)\ast H=x\ast H$. Since $H$ is normal in $G_u$, we have
$H\ast \psi_u(x)=H\ast x$ and this is equivalent to $K\psi_u(x)=Kx$.
Now $K$ is a normal subgroup of $G$, and hence $\psi_u(x)K=xK$. So
$\theta(xu^{-1})K=u^{-1}xK$. If we put $y=xu^{-1}$, then
$\theta(y)K=u^{-1}yuK$ and this proves that $\theta_K$ is an inner
automorphism.

Conversely, suppose $K$ is a $\theta$-invariant normal subgroup of
$G$ and $\theta_K=I_{uK}$. Then by a similar argument one can prove
that $H=Ku^{-1}\unlhd (G, f)$.
\end{proof}

We saw before that if $(G, f)$ is a non-strong $GTS$, then it is
reduced. So we determine when a polyadic group belongs to the class
$GTS^{\ast}$.

\begin{theorem}
A polyadic group $(G, f)$ is $GTS^{\ast}$ iff whenever $K$ is a
$\theta$-invariant normal subgroup of $(G, \cdot)$ with $\theta_K$ inner,
then $K=G$.
\end{theorem}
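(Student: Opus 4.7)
The plan is to apply Corollary 4.3 directly in both directions. That corollary sets up the required dictionary: a subset $H \subseteq G$ is a normal polyadic subgroup of $(G,f)$ if and only if there is some $u \in G$ for which $K := H\cdot u^{-1}$ is a $\theta$-invariant normal subgroup of $(G,\cdot)$ whose induced quotient automorphism $\theta_K$ equals $I_{uK}$, and conversely every such $(K,u)$ gives rise to the normal polyadic subgroup $H = Ku^{-1}$. Once this translation is in hand, the theorem is essentially a restatement.

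For the forward implication, I will assume $(G,f)$ is $GTS^{\ast}$ and take an arbitrary $\theta$-invariant normal subgroup $K$ of $(G,\cdot)$ with $\theta_K$ inner. Choose $u \in G$ with $\theta_K = I_{uK}$. By the converse part of Corollary 4.3, $H := Ku^{-1}$ is a normal polyadic subgroup of $(G,f)$, so the hypothesis $GTS^{\ast}$ forces $H = G$. Since $Ku^{-1}$ is simply the right translate of $K$ in $(G,\cdot)$, the equality $Ku^{-1} = G$ yields $K = Gu = G$, as required.

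For the backward implication, I will assume that the only $\theta$-invariant normal subgroup $K$ of $(G,\cdot)$ with $\theta_K$ inner is $K = G$, and take an arbitrary normal polyadic subgroup $H \unlhd (G,f)$. By the first part of Corollary 4.3, there is some $u \in G$ such that $K := H u^{-1}$ is a $\theta$-invariant normal subgroup of $(G,\cdot)$ with $\theta_K$ inner. The hypothesis then forces $K = G$, hence $H = Ku = G$, showing $(G,f) \in GTS^{\ast}$.

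There is essentially no obstacle here beyond invoking Corollary 4.3 correctly; the only point worth being explicit about is the trivial observation that a right translate $Ku^{-1}$ of a subgroup has the same cardinality as $K$, so that $H = G$ and $K = G$ are equivalent under the correspondence. All the technical content has been packaged into the earlier results, in particular Theorem 4.1 and Lemma 4.2, via which Corollary 4.3 was established.
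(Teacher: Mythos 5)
Your proposal is correct and follows essentially the same route as the paper: both directions are obtained by invoking Corollary 4.3 to translate between normal polyadic subgroups $H$ and pairs $(K,u)$ with $K$ a $\theta$-invariant normal subgroup of $(G,\cdot)$ and $\theta_K$ inner, then using the translation $K = Hu^{-1}$ to pass between $H=G$ and $K=G$. No substantive difference from the paper's argument.
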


\begin{proof}
First let $(G,f)$ be $GTS^{\ast}$ and $K$ be a
$\theta$-invariant normal subgroup of $G$ with $\theta_K$ inner. Then by the above corollary, there is a $u$ such that $H=Ku^{-1}\unlhd (G, f)$. Hence $Ku^{-1}=G$ and so $K=G$. To prove the converse, suppose $H\unlhd (G, f)$. So there is a $u$ such that $K=H\cdot u^{-1}$ satisfies our hypothesis. Therefore $K=G$ and hence $H=G$, proving that $(G, f)$ is $GTS^{\ast}$.
\end{proof}

As a final remark, the reader most notice that the binary groups $G_u$ which we used in this section, are in fact retract of $(G, f)$ in the case when $u$ is the skew for some other element. Suppose $u=\overline{a}$. In the group $ret_a(G, f)$, as we said in the introduction, the identity is $\overline{a}$ and if we define $c=f(\stackrel{(n)}{\overline{a}})$ and $\phi(x)=f(\overline{a}, x, \stackrel{(n-2)}{a})$, then  by \cite{Sok}, we have
$$
(G, f)=der_{\phi, c}(ret_a(G, f), \ast).
$$
It is easy to see that $x\ast y=x(\overline{a})^{-1}y$ and hence $G_u=ret_a(G, f)$. But, note that in general, it is false to say that every $u$ is equal to the skew element of some $a$; there are polyadic groups in which the skew elements of any $x$ and $y$ are equal. So in the general case $G_u$ is not equal to any retract.

\end{document}